\definecolor{webgreen}{rgb}{0,0,1}
\definecolor{recrown}{rgb}{1,.2,.6}
\begin{document}
\newtheorem{theorem}{Theorem}
\newtheorem{corollary}[theorem]{Corollary}
\newtheorem{lemma}[theorem]{Lemma}
\theoremstyle{definition}
\newtheorem{example}{\bf Example}
\theoremstyle{theorem}
\newtheorem{conjecture}[theorem]{Conjecture}
\newtheorem{thmx}{\bf Theorem}
\renewcommand{\thethmx}{\text{\Alph{thmx}}}
\newtheorem{lemmax}{\bf Lemma}
\renewcommand{\thelemmax}{\text{\Alph{lemmax}}}
\hoffset=-0cm
\theoremstyle{definition}
\newtheorem*{definition}{Definition}
\theoremstyle{remark}
\newtheorem*{remark}{\bf Remark}
\theoremstyle{remark}
\newtheorem*{remarks}{\bf Remarks}
\parindent=0cm
\title{\bf On irreducible factors of polynomials over integers}
\author{Rishu Garg$^{1}$ {\large \orcidlink{0009-0008-2348-8340}}}
\author{Jitender Singh$^{2,\dagger}$ {\large \orcidlink{0000-0003-3706-8239}}}
\address[1,2]{Department of Mathematics,
Guru Nanak Dev University, Amritsar-143005, India\newline 
{\tt jitender.math@gndu.ac.in}, {\tt rishugarg128@gmail.com}}
\markright{}
\date{}
\footnotetext[2]{Corresponding author email(s): {\tt jitender.math@gndu.ac.in}

2020MSC: {Primary 12E05; 11C08}\\

\emph{Keywords}: Dumas irreducibility criterion; Newton polygon; Polynomial factorization; Integer coefficients.
}
\maketitle
\newcommand{\K}{\mathbb{K}}
\begin{abstract}
In this paper, we obtain several new factorization results for certain classes of polynomials having integer coefficients. In doing so, we use the information about prime factorization of the value taken up by such polynomials and their higher order formal derivatives at sufficiently large integer arguments. If a lower bound for the minimum possible degree of a factor of such a  polynomial is known a priori, then the integer argument becomes significantly smaller, which makes the underlying factorization result easier to apply. A result on explicit lower degree factor bound for the classes of polynomials considered in this paper is also proved via Newton polygons.
\end{abstract}
\section{Introduction}
For a polynomial $f=a_0+a_1 z+\cdots+a_nz^n\in \mathbb{Z}[z]$ with $a_0a_n\neq 0$, we define the polynomial $s_i\in \mathbb{Z}[z]$ by
\begin{eqnarray*}
s_i(z)=f^{(i)}(z)/i!,~i=0,1,\ldots, n,
\end{eqnarray*}
where $f^{(0)}(z)=f(z)$ and $f^{(i)}(z)$ for $i\geq 1$ denote the formal $i$th order derivative of $f$ with respect to $z$. The height $h_f$ of the polynomial $f$ is defined as the number
\begin{eqnarray*}
h_f:=\max_{0\leq i\leq n-1}\{|a_i|/|a_n|\}.
\end{eqnarray*}
In \cite{Mu}, a well appreciated inventive fact that all the zeros of $f$  lie in the open disc $|z|<h_f + 1$ in the complex plane supported with the condition that $|f|$ takes a prime value for an integer argument strictly exceeding $h_f+1$ impelled the irreducibility of $f$ in $\mathbb{Z}[z]$. This result was further generalized by Girstmair in \cite{G} waiving off the primality hypothesis for primitive polynomials $f$ satisfying $f(m)=\pm pd$ for a positive integer $d$ a prime number $p\nmid d$ and $m$ strictly exceeding $h_f + d$.  In \cite{J-S-2}, Girstmair's irreducibility criterion was further generalized and the following result was proved.
\begin{thmx}[\cite{J-S-2}]\label{th:A}
Let $f=a_0+a_1 z+\cdots+a_nz^n\in \Bbb{Z}[z]$ be a primitive polynomial with $a_0a_n\neq 0$. Suppose there exist natural numbers $m$, $d$, $k$, $j\leq n$, and a prime $p\nmid d$ such that $m\geq h_f+1+d$, $f(m)=\pm p^k d$, $\gcd(k,j)=1$, $p^k$ divides $s_i(m)$ for each index $i=1,\ldots,j-1$, and for $k>1$, also $p$ does not divide $s_j(m)$. Then the polynomial $f$ is irreducible in $\Bbb{Z}[z]$.
\end{thmx}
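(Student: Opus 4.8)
The plan is to argue by contradiction. Suppose $f$ is reducible; since $f$ is primitive, we may write $f=gh$ with $g,h\in\mathbb{Z}[z]$ and $r:=\deg g\ge 1$, $s:=\deg h\ge 1$, $r+s=n$. The first ingredient is the location of zeros recalled in the introduction: every zero $\alpha$ of $f$ satisfies $|\alpha|<h_f+1$. As $m$ is a positive integer with $m\ge h_f+1+d$, each zero gives $|m-\alpha|\ge m-|\alpha|>m-(h_f+1)\ge d$. Writing $g(z)=b\prod_{\alpha}(z-\alpha)$ over the zeros of $g$, with leading coefficient $|b|\ge 1$, this yields the strict lower bounds $|g(m)|>d^{\,r}$ and, symmetrically, $|h(m)|>d^{\,s}$. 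These will be the source of the final contradiction.

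The heart of the argument is a Dumas/Newton-polygon analysis of the shifted polynomial. Put $F(t):=f(m+t)=\sum_{i=0}^n s_i(m)\,t^i$, so that the factorization transports to $F(t)=G(t)H(t)$ with $G(t)=g(m+t)$ and $H(t)=h(m+t)$. Writing $v_p$ for the $p$-adic valuation, the constant term $F(0)=f(m)=\pm p^k d$ has $v_p(F(0))=k$ because $p\nmid d$. For $k>1$ the hypotheses $p^k\mid s_i(m)$ for $1\le i\le j-1$ together with $p\nmid s_j(m)$ show that, modulo $p$, $F$ vanishes to order exactly $j$ at $t=0$, that $(j,0)$ lies on the $p$-adic Newton polygon of $F$, and that the points $(1,\cdot),\dots,(j-1,\cdot)$ all sit at height $\ge k$. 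Hence the initial edge of the Newton polygon of $F$ is the single segment from $(0,k)$ to $(j,0)$ of slope $-k/j$, which, since $\gcd(k,j)=1$, contains no interior lattice point.

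I would then invoke Dumas' theorem: the Newton polygon of $F$ is the edge-concatenation of those of $G$ and $H$, so every negative-slope edge of $G$ or $H$ reappears in $F$ and must therefore have slope exactly $-k/j$. Set $a=v_p(g(m))$, $b=v_p(h(m))$, so $a+b=k$, and let $a',b'$ be the orders of vanishing of $G,H$ modulo $p$ at $t=0$, so $a'+b'=j$ by multiplicativity of the order of vanishing. Since $(a',0)$ is the first point of $G$'s polygon on the $t$-axis, the negative part of that polygon is the single edge from $(0,a)$ to $(a',0)$, forcing $aj=a'k$, and likewise $bj=b'k$. Because $\gcd(k,j)=1$ and $0\le a'\le j$, this pins $a'\in\{0,j\}$, whence $a\in\{0,k\}$: one of $g(m),h(m)$ is coprime to $p$ and the other is divisible by $p^k$. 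Say $p\nmid h(m)$. Comparing $p$-free parts in $g(m)h(m)=\pm p^k d$ then gives $|h(m)|\mid d$, so $|h(m)|\le d$; but the root bound gives $|h(m)|>d^{\,s}\ge d$ since $s\ge 1$, a contradiction. When $k=1$ the conclusion $v_p(h(m))=0$ is immediate from $a+b=1$, which is exactly why the derivative hypotheses are needed only for $k>1$.

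The step I expect to be the main obstacle is the slope-matching inside the Dumas reduction: one must argue carefully that $G$ and $H$ cannot each contribute a mixture of negative slopes but only the single slope $-k/j$, and that the vanishing orders $a',b'$ computed modulo $p$ genuinely equal the horizontal lengths of the negative-slope parts of the respective Newton polygons. Handling this also requires tracking that $G$ and $H$ need not be monic and that primitivity of $f$ rules out a spurious constant factor; once the dichotomy $v_p(g(m))\in\{0,k\}$ is secured, the root bound closes the argument cleanly.
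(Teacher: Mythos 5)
Your proof is correct, and it is essentially the argument this paper runs for its generalization; note that the paper itself does not prove Theorem \ref{th:A} but quotes it from \cite{J-S-2}. Your shift $F(t)=f(m+t)$, the zero-location bound $|h(m)|>d^{\deg h}$, and the final comparison with $|h(m)|\mid d$ are exactly the paper's Lemma \ref{L:2} and the closing step of the proof of Theorem \ref{th:1}; the only difference is that you derive the key dichotomy $v_p(g(m))\in\{0,k\}$ directly from Dumas' theorem (single negative-slope edge from $(0,k)$ to $(j,0)$ with no interior lattice points since $\gcd(k,j)=1$), whereas the paper imports this as Lemma \ref{L:1} from \cite{Singh25}. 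Your hypotheses $p^k\mid s_i(m)$ for $i<j$ indeed imply the slope condition $\tfrac{k}{j}<\tfrac{v_p(s_t(m))}{j-t}$ of that lemma, and your handling of the $k=1$ case and of primitivity (so that neither factor vanishes mod $p$ and the orders of vanishing add to $j$) is sound.
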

Gisrtmair's criterion corresponds to the case $k=1$ of Theorem \ref{th:A}. In fact Girstmair's criterion has been further generalized recently in \cite{Singh24} and \cite{R-J-S} enabling several classes of irreducible polynomials over integers.

In this paper, we further generalize aforementioned irreducibility results by imposing conditions on one or higher order derivatives of $f$ evaluated at sufficiently large integer argument $m$, where some of the criteria proved in this paper derive motivation from the classical irreducibility criterion of Dumas \cite{D}. To proceed, we recall that for any prime $p$ and nonzero integer $a$, $v_p(a)$ denote the largest nonnegative integer for which $p^{v_p(a)}$ divides $a$, and we define $v_p(0)=\infty$. We also recall that a polynomial $f\in \mathbb{Z}[z]$ is primitive if greatest common divisor of all the coefficients of $f$ is $1$.

First of our main results is the following factorization theorem.
\begin{theorem}\label{th:1}
Let $f=a_0+a_1 z +\cdots+a_nz^n\in \mathbb{Z}[z]$ be a primitive polynomial with $a_0a_n\neq 0$. Suppose there exists a positive integer $m\geq h_f+2$ for which $f(m)=\pm p_1^{k_1}\cdots p_r^{k_r}$,  where $p_1,\ldots, p_r$ are  primes which are all distinct for $r\geq 2$. For each $i=1,\ldots, r$, let $j_i\leq n$ be a positive integer for which the following conditions are satisfied.
\begin{enumerate}
\item[(i)] $v_{p_i}(s_{j_i}(m))=0$, and $\gcd(k_i,j_i)=1$,
\item[(ii)] If $j_i>1$, then $\frac{k_i}{j_i}<\frac{v_{p_i}(s_t(m))}{j_i-t}$ for each $t=1,\ldots,j_i-1$.
\end{enumerate}
Then $f$ is a product of at most $r$ irreducible polynomials in $\mathbb{Z}[z]$. In particular, if $r=1$, then $f$ is irreducible.
\end{theorem}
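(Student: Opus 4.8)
The plan is to translate the problem to the origin and read the hypotheses off as Newton-polygon data. First I would set $F(z):=f(z+m)=\sum_{t=0}^{n}s_t(m)\,z^t$, using the Taylor expansion $f(z+m)=\sum_t(f^{(t)}(m)/t!)\,z^t$; thus the numbers $s_t(m)$ are exactly the coefficients of $F$, with constant term $F(0)=f(m)=\pm p_1^{k_1}\cdots p_r^{k_r}$ and leading coefficient $a_n$. Since translation preserves irreducibility and content, a factorization $f=f_1\cdots f_s$ into primitive irreducibles in $\mathbb{Z}[z]$ (which exists by Gauss's lemma, every $f_\ell$ being non-constant) corresponds to $F=F_1\cdots F_s$ with $F_\ell(z)=f_\ell(z+m)$ and $F_\ell(0)=f_\ell(m)$. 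The goal is then to show $s\le r$.

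The next ingredient is the root bound. Using the classical fact that every zero $\alpha$ of $f$ satisfies $|\alpha|<h_f+1$, together with $m\ge h_f+2$, I would show $|m-\alpha|>1$ for every zero, whence any non-constant factor $g\mid f$ satisfies $|g(m)|=|\mathrm{lc}(g)|\prod_\alpha|m-\alpha|>1$; as $g(m)\in\mathbb{Z}$ this forces $|g(m)|\ge 2$. In particular each $|f_\ell(m)|\ge 2$, so $f_\ell(m)$ is divisible by at least one of the primes $p_1,\dots,p_r$, these being its only possible prime divisors since $f_\ell(m)\mid f(m)$.

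The heart of the argument is a Newton-polygon analysis carried out one prime at a time. Fix $i$ and translate conditions (i)--(ii): since $v_{p_i}(f(m))=k_i$, the point $(0,k_i)$ lies on $N_{p_i}(F)$; condition (i) places $(j_i,0)$ on it; and condition (ii) says every intermediate point $(t,v_{p_i}(s_t(m)))$, $1\le t\le j_i-1$, lies strictly above the segment joining $(0,k_i)$ to $(j_i,0)$. Hence the leftmost edge of $N_{p_i}(F)$ is exactly this segment, of slope $-k_i/j_i$ and horizontal length $j_i$, and because $v_{p_i}(s_{j_i}(m))=0$ it is the only edge of negative slope. By Dumas's theorem the edges of $N_{p_i}(F)$ are the disjoint union of the edges of the $N_{p_i}(F_\ell)$, so every negative-slope edge of any factor has slope exactly $-k_i/j_i$, and their horizontal lengths sum to $j_i$. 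Since $\gcd(k_i,j_i)=1$, a slope-$(-k_i/j_i)$ edge with lattice endpoints has horizontal length divisible by $j_i$; therefore exactly one factor carries this edge and all others have no negative edge. I would then check that a factor has a negative edge if and only if $p_i\mid f_\ell(m)$: if $v_{p_i}(f_\ell(m))>0$ but $N_{p_i}(F_\ell)$ had no negative edge, then all coefficients of $F_\ell$ would be divisible by $p_i$, forcing $p_i\mid s_{j_i}(m)$ and contradicting (i). Thus, for each $i$, the prime $p_i$ divides $f_\ell(m)$ for exactly one index $\ell$.

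Finally I would combine the two counts: each of the $s$ factors is divisible by at least one prime, while each of the $r$ primes divides exactly one factor, so the nonempty sets $\{\,i:p_i\mid f_\ell(m)\,\}$ are pairwise disjoint subsets of $\{1,\dots,r\}$, giving $s\le r$; for $r=1$ this yields irreducibility. The main obstacle I anticipate is the third step: making the Newton-polygon bookkeeping fully rigorous—precisely, justifying via Dumas that the single edge of slope $-k_i/j_i$ cannot be split between two factors—which is exactly where the coprimality $\gcd(k_i,j_i)=1$ and the lattice-endpoint constraint must be used with care.
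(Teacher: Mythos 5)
Your proposal is correct and follows essentially the same route as the paper: translate by $m$, use the root bound $|z|<h_f+1$ together with $m\ge h_f+2$ to force $|g_\ell(0)|\ge 2$ for every irreducible factor, and then use a Newton-polygon argument to show that each $p_i$ can divide the constant term of only one factor, so that counting gives at most $r$ factors. The only difference is that the paper delegates the Newton-polygon step to a cited lemma (Lemma~\ref{L:1}, itself a Dumas-type result) and phrases the counting as a pigeonhole contradiction, whereas you reprove that step inline via Dumas's theorem and the observation that $\gcd(k_i,j_i)=1$ forces the unique negative-slope edge to lie entirely in one factor; both versions are sound.
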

Theorem \ref{th:1} in particular, for $r=1$, yields a generalization of Theorem \ref{th:A} proved in \cite[Theorem 1]{R-J-S}. Further, the upper bound mentioned in Theorem \ref{th:1} is best possible in the sense that there exist polynomials over integers attaining the said bound. For example, the polynomial
\begin{eqnarray*}
f=64+56z^2+14z^4+z^6
\end{eqnarray*}
satisfies the hypothesis of Theorem \ref{th:1} for $m=117>66=h_f+2$, since here, we have
\begin{eqnarray*}
s_0(117)=13691\times 13693\times 13697,
\end{eqnarray*}
which is a product of three distinct primes $p_1=13691$, $p_2=13693$, and $p_3=13697$ so that $r=3$ and $k_1=k_2=k_3=1$. Further, $s_i(117)$ is coprime to $p_t$ for each $i=1,\ldots,6$ and $t=1,2,3$. Consequently, we have $j_1=j_2=j_3=1$. Thus, the polynomial $f$ is a product of at most $r=3$ irreducible factors in $\mathbb{Z}[z]$, where a direct computation reveals that
\begin{eqnarray*}
64+56z^2+14z^4+z^6=(2+z^2)(4+z^2)(8+z^2),
\end{eqnarray*}
which is a product of exactly three irreducible polynomials in $\mathbb{Z}[z]$. Note that none of Theorem \ref{th:A} and  \cite[Theorem 1]{R-J-S} is applicable for deducing information about factorization of the polynomial $64+56z^2+14z^4+z^6$.
\begin{theorem}\label{th:2}
Let $f=a_0+a_1 z +\cdots+a_nz^n\in \mathbb{Z}[z]$ be a primitive polynomial with $a_0a_n\neq 0$. Suppose there exists a positive integer $m\geq h_f+2$ for which $s_n(m)=\pm p_1^{k_1}\cdots p_r^{k_r}$,  where $p_1,\ldots, p_r$ are  primes which are all distinct for $r\geq 2$.  For each $i=1,\ldots, r$, let $j_i\leq n$ be a positive integer for which the following conditions are satisfied.
\begin{enumerate}
\item[(i)] $v_{p_i}(s_{n-j_i}(m))=0$, and $\gcd(k_i,j_i)=1$,
\item[(ii)] If $j_i>1$, then $\frac{k_i}{j_i}<\frac{v_{p_i}(s_{n-t}(m))}{j_i-t}$ for each $t=1,\ldots,j_i-1$,
\item[(iii)] $|s_0(m)/q|\leq |s_n(m)|$, where $q$ is the smallest prime divisor of $|s_0(m)|$.
\end{enumerate}
Then $f$ is a product of at most $r$ irreducible polynomials in $\mathbb{Z}[z]$. In particular, if $r=1$, then $f$ is irreducible.
\end{theorem}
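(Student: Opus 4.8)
The plan is to combine a Newton--polygon (Dumas) analysis at the top of the polygon with a size estimate coming from condition (iii). Write $f=g_1\cdots g_s$ for a factorization into nonconstant irreducible polynomials in $\mathbb{Z}[z]$, and let $b_l$ denote the leading coefficient of $g_l$, so that $\prod_l b_l=a_n=s_n(m)$. I first record the two basic facts that drive everything. Since $f$ is primitive with $a_0a_n\neq 0$, all zeros of $f$, hence of each $g_l$, lie in $|z|<h_f+1$; as $m\geq h_f+2$, every zero $\alpha$ of $g_l$ satisfies $|m-\alpha|>1$, whence $|g_l(m)|=|b_l|\prod_\alpha|m-\alpha|>|b_l|\geq 1$, so $|g_l(m)|\geq 2$ for each $l$. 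I will also use that $g_l(m)$ divides $f(m)=s_0(m)$ in $\mathbb{Z}$.

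For the Newton--polygon input I would pass to the shift $g(w)=f(w+m)=\sum_{t=0}^n s_t(m)w^t$ and then to its reciprocal $G(w)=w^n g(1/w)$, whose coefficient of $w^u$ is $s_{n-u}(m)$. Conditions (i) and (ii) say precisely that $G$ satisfies the hypotheses of Theorem \ref{th:1}: its constant term equals $s_n(m)=\pm p_1^{k_1}\cdots p_r^{k_r}$, the coefficient $s_{n-j_i}(m)$ of $w^{j_i}$ is a $p_i$-adic unit, and for $1\leq t\leq j_i-1$ the coefficient $s_{n-t}(m)$ of $w^t$ satisfies $k_i/j_i<v_{p_i}(s_{n-t}(m))/(j_i-t)$. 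Hence for each $i$ the $p_i$-adic Newton polygon of $G$ has leftmost edge joining $(0,k_i)$ to $(j_i,0)$, of slope $-k_i/j_i$ with $\gcd(k_i,j_i)=1$, all other vertices lying strictly above it. Applying to the induced factorization of $G$ the same Newton--polygon reasoning that underlies Theorem \ref{th:1} --- and noting that the constant term of the reciprocal of $g_l(w+m)$ is exactly $b_l$ --- I get that for each $i$ exactly one factor satisfies $p_i\mid b_l$ (indeed with $v_{p_i}(b_l)=k_i$) while all others satisfy $p_i\nmid b_l$.

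With this in hand I split into two cases. If every $g_l$ has $|b_l|>1$, then since $b_l$ divides $a_n=\pm p_1^{k_1}\cdots p_r^{k_r}$ each $b_l$ is divisible by at least one $p_i$; assigning to $g_l$ such a prime gives a map into $\{p_1,\dots,p_r\}$ which is injective because each $p_i$ divides exactly one leading coefficient, so $s\leq r$. The remaining case is that some factor, say $g_{l^*}$, has $|b_{l^*}|=1$. Writing $R_l=\prod_\alpha|m-\alpha|=|g_l(m)|/|b_l|>1$, one has $\prod_l R_l=|f(m)|/|a_n|$, so condition (iii) reads $\prod_l R_l\leq q$, where $q$ is the least prime divisor of $|s_0(m)|=|f(m)|$. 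Here I use that $|g_{l^*}(m)|=R_{l^*}$ is an integer $\geq 2$ all of whose prime factors divide $|f(m)|$ and hence are $\geq q$, so $R_{l^*}\geq q$. Combining, $q\geq\prod_l R_l\geq R_{l^*}\geq q$ forces $\prod_{l\neq l^*}R_l=1$; since each $R_l>1$ strictly, there can be no other factor, i.e.\ $s=1$. In either case $s\leq r$, and when $r=1$ this gives irreducibility.

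The main obstacle I anticipate is the bookkeeping in the Newton--polygon step: one must argue, via Dumas's theorem \cite{D} that the polygon of a product is the Minkowski sum of the polygons of the factors, that the edge of slope $-k_i/j_i$ and denominator $j_i$ cannot be split among two factors, so that $p_i$ concentrates in exactly one leading coefficient; the coprimality $\gcd(k_i,j_i)=1$ is what makes this rigid. The only other delicate point is ensuring the size estimate in the case $|b_{l^*}|=1$ invokes (iii) with the correct normalization: the inequalities above are tight precisely because each $R_l$ exceeds $1$ by the root bound, which is where $m\geq h_f+2$ re-enters.
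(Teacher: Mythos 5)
Your proof is correct and follows essentially the same route as the paper's: shift to $f(m+z)$, use the root bound $|z|>1$ coming from $m\geq h_f+2$, apply the Dumas/Newton--polygon rigidity statement (the paper's Lemma \ref{L:1}, quoted from \cite{Singh25}) to the reversal $z^{n}f(m+1/z)$ so that each $p_i$ divides the leading coefficient of exactly one irreducible factor, and use condition (iii) together with $g_l(m)\mid f(m)$ to control factors with unit leading coefficient. The only difference is organizational: the paper's Lemma \ref{L:4} uses (iii) to show directly that every irreducible factor has leading coefficient of absolute value greater than $1$ and then pigeonholes, whereas you split into cases and show that a unit leading coefficient forces $s=1$ --- the same ingredients, arranged slightly differently.
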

The upper bound on number of irreducible factors of $f$ as mentioned in Theorem \ref{th:2} is attained by the polynomial
\begin{eqnarray*}
f=81 + 1782 z^2 + 9797 z^4
\end{eqnarray*}
for $m=8>h_f+2$, since here, we have $s_0(8)=6217\times 6473$, where $6217$ and $6473$ are prime numbers so that $q=6217$ and $s_0(8)/q=6473<9797=s_4(8)$ holds. Further, we have $s_4(8)=9797=97\times 101$, where $p_1=97$ and $p_2=101$ are prime numbers so that $r=2$ and $k_1=1=k_2$. Since $s_3(8)=2^5\times 97\times 101$, $s_2(8)=2\times 3\times 5\times 7\times 17923$, where $17923$ is also a prime, it follows that $s_2(8)$ is coprime to $97\times 101$, which shows that $j_1=j_2=2$. We then have
\begin{eqnarray*}
\frac{k_i}{j_i}=\frac{1}{2}<1=\frac{v_{p_i}(s_3(8))}{j_i-1},~i=1,2.
\end{eqnarray*}
We conclude that $f$ is a product of at most two irreducible polynomials in $\mathbb{Z}[z]$, where we note that $f$ factors as
 \begin{eqnarray*}
f(z)=81 + 1782 z^2 + 9797 z^4=(9 + 97 z^2) (9 + 101 z^2),
\end{eqnarray*}
which is a product of exactly two irreducible polynomials in $\mathbb{Z}[z]$.

For a nonconstant polynomial $f$ having integer coefficients, let $\Delta_f\leq \deg f$ be a natural number such that $f$ has no factor of degree less than $\Delta_f$ in $\mathbb{Z}[z]$. It follows that if
\begin{eqnarray*}
\deg f\leq 2\Delta_f,
\end{eqnarray*}
then the polynomial $f$ is irreducible in $\mathbb{Z}[z]$. The following result will be useful in computing $\Delta_f$ for a large class of polynomials having integer coefficients, which also generalizes a result of the paper \cite[Corollary~ 3.8]{Kh23}.
	\begin{theorem}\label{th:5}
		Let $f=a_0+a_1z+\cdots +a_nz^n \in \mathbb{Z}[z]$ with $a_0a_n\neq 0$. Suppose there exists a prime number  $p$ and a positive integer $j\leq n$ for which the following hold.
		\begin{enumerate}[label=$(\roman*)$]
			\item[$(i)$] $v_p(a_j)=0$,
			\item[$(ii)$] $\frac{v_p(a_0)}{j}\leq \frac{v_p(a_i)}{j-i}$ for each $i=0,1,\ldots,j-1$,
			\item[(iii)] If $j<n$, then $\frac{v_p(a_n)}{n-j} \leq \frac{v_p(a_i)}{i-j}$ for each $i=j+1,j+2,\ldots,n-1$.
		\end{enumerate}
Let $d_1=\gcd(v_p(a_0),j)$, and if $j<n$, then we let $d_2=\gcd(v_p(a_n),n-j)$. Define
\begin{eqnarray*}
k_f=\begin{cases}\min\{j/d_1, (n-j)/d_2\},~&~\text{if}~j<n;\\
n/d_1,~&~\text{if}~j=n.
\end{cases}
\end{eqnarray*}
		Then any irreducible factor of $f$ in $\mathbb{Z}[z]$ has degree at least $k_f$.
	\end{theorem}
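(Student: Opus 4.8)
The plan is to read hypotheses $(i)$--$(iii)$ as a complete description of the $p$-adic Newton polygon of $f$, that is, the lower convex hull of the points $(i,v_p(a_i))$ for $0\le i\le n$ with $a_i\ne 0$. Condition $(i)$ places the vertex $(j,0)$ on the horizontal axis, and it is the lowest such point. Condition $(ii)$ says precisely that each point $(i,v_p(a_i))$ with $0\le i<j$ lies on or above the segment joining $(0,v_p(a_0))$ to $(j,0)$: that segment has equation $y=v_p(a_0)(j-x)/j$, so lying on or above it at $x=i$ reads $v_p(a_i)\ge v_p(a_0)(j-i)/j$, which rearranges to $(ii)$. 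Likewise, when $j<n$, condition $(iii)$ forces every point $(i,v_p(a_i))$ with $j<i<n$ to lie on or above the segment from $(j,0)$ to $(n,v_p(a_n))$. Hence the Newton polygon of $f$ is the single edge from $(0,v_p(a_0))$ to $(n,0)$ when $j=n$, and otherwise consists of exactly two edges meeting at $(j,0)$: a left edge of slope $-v_p(a_0)/j\le 0$ and a right edge of slope $v_p(a_n)/(n-j)\ge 0$. Written in lowest terms, the left slope has denominator $j/d_1$ and the right slope denominator $(n-j)/d_2$.

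Next I would invoke Dumas's theorem \cite{D} on the behaviour of Newton polygons under multiplication: if $f=gh$ in $\mathbb{Z}[z]$, then the Newton polygon of $f$ is the Minkowski sum of those of $g$ and $h$; equivalently, the edges of the polygon of $f$ are obtained by translating and concatenating the edges of the polygons of $g$ and $h$ in order of increasing slope. In particular every slope occurring in the polygon of $g$ occurs in that of $f$, so the slopes of $g$ lie in $\{-v_p(a_0)/j,\,v_p(a_n)/(n-j)\}$. Moreover, since $a_0\ne 0$ forces $g(0)\ne 0$, the polygon of $g$ spans the abscissae from $0$ to $\deg g$, so its total horizontal length equals $\deg g$.

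The degree bound then rests on a lattice-point observation: an edge of a Newton polygon runs between two lattice points, so if its slope in lowest terms is $\pm a/b$ with $\gcd(a,b)=1$, then $b$ divides its horizontal length. Applying this to $g$, its left edge (if present) has horizontal length a positive multiple of $j/d_1$ and its right edge (if present) a positive multiple of $(n-j)/d_2$; since $\deg g\ge 1$ at least one edge is present, whence $\deg g\ge\min\{j/d_1,(n-j)/d_2\}=k_f$. The case $j=n$ is the same argument with the single slope $-v_p(a_0)/n$ of denominator $n/d_1$, giving $\deg g\ge n/d_1=k_f$. Because every irreducible factor of $f$ arises as such a $g$, the conclusion follows.

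The routine part is the convexity bookkeeping of the first paragraph, including the degenerate cases $v_p(a_0)=0$ or $v_p(a_n)=0$, where the corresponding slope is $0$ and $d_1=j$ (respectively $d_2=n-j$), so the claimed bound $j/d_1=1$ (respectively $(n-j)/d_2=1$) holds trivially and consistently. I expect the point needing the most care to be the precise coupling of the Newton-polygon additivity for a factorization over $\mathbb{Z}$ with the slope-denominator divisibility: one must verify that the edges of the polygon of $g$ genuinely have lattice endpoints, which they do since the abscissae are coefficient indices and the ordinates are $p$-adic valuations, so that the divisibility of horizontal lengths by the slope denominators is legitimate.
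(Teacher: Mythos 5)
Your proof is correct and follows essentially the same route as the paper: you read conditions $(i)$--$(iii)$ as pinning down the Newton polygon of $f$ with vertices $(0,v_p(a_0))$, $(j,0)$, and (when $j<n$) $(n,v_p(a_n))$, and then use Dumas's theorem to force the degree of every nonconstant factor to be a positive multiple of $j/d_1$ or of $(n-j)/d_2$. The only cosmetic difference is the packaging of Dumas: the paper uses the lattice-point form (its Theorem~\ref{th:B} together with Lemma~\ref{L:7}), in which each factor's degree is a sum of horizontal gaps between consecutive lattice points of the polygon, whereas you use the Minkowski-sum formulation plus the observation that a slope's denominator in lowest terms divides the horizontal length of any lattice edge with that slope --- these yield the same count.
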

Note that a polynomial $f$ over integers which satisfies the hypothesis of Theorem \ref{th:5} also satisfies the following:
\begin{eqnarray*}
\Delta_f=k_f.
\end{eqnarray*}
Motivated by a result proved in the paper \cite[Proposition 1]{JA}, we may have the following more general result.
\begin{theorem}\label{th:3}
Let $f=a_0+a_1z+ \cdots +a_nz^n \in \mathbb{Z}[z]$ be a primitive polynomial with $a_0a_n\neq 0$.  Suppose there exist natural numbers $m$, $d$, $k$, and a prime $p\nmid d$  with
\begin{eqnarray*}
(m-1-h_f)^{\Delta_f}\geq d,
\end{eqnarray*}
such that $s_0(m)=\pm p^kd$. Then  the polynomial $f$ is a product of at most
\begin{eqnarray*}
\min_{0\leq i\leq n}\{i+v_p(s_i(m))\}
\end{eqnarray*}
irreducible factors in $\mathbb{Z}[z]$. In particular, if $k=1$, or $p\nmid s_1(m)$, then $f$ is irreducible.
\end{theorem}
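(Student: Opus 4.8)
The plan is to reduce the statement to a valuation estimate on the coefficients of a shifted polynomial. Writing the Taylor expansion of $f$ about the point $m$, one has the identity $f(z)=\sum_{i=0}^{n}s_i(m)(z-m)^i$, so the integer translate $F(w):=f(w+m)=\sum_{i=0}^{n}s_i(m)\,w^i$ has precisely the numbers $s_i(m)$ as its coefficients. Since $w\mapsto w+m$ is a ring automorphism of $\mathbb{Z}[z]$ that preserves degree, content, and irreducibility, the polynomial $F$ has exactly as many irreducible factors (with multiplicity) as $f$. Thus I would fix a factorization $f=\pm g_1\cdots g_s$ into $s$ nonconstant primitive irreducible factors (the primitivity of $f$ guarantees that no constant irreducible factors occur), translate it to $F=\pm G_1\cdots G_s$ with $G_t(w)=g_t(w+m)$, and aim to prove $s\le \min_{0\le i\le n}\{i+v_p(s_i(m))\}$.

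The first key step is to show that each factor's constant term $G_t(0)=g_t(m)$ is divisible by $p$. Every root $\alpha$ of $g_t$ is a root of $f$, hence satisfies $|\alpha|<h_f+1$ by the classical Cauchy-type bound recalled in the introduction. Since $m-1-h_f\ge 1$ and $\deg g_t\ge\Delta_f$, estimating $|g_t(m)|$ as the product of the quantities $|m-\alpha|\ge m-|\alpha|>m-1-h_f$ over the roots (times the leading coefficient, of absolute value at least $1$) gives $|g_t(m)|>(m-1-h_f)^{\deg g_t}\ge (m-1-h_f)^{\Delta_f}\ge d$. Now $g_t(m)$ divides $f(m)=\pm p^kd$ with $p\nmid d$; a divisor of $p^kd$ whose absolute value exceeds $d$ cannot be coprime to $p$, because its $p$-free part divides $d$ and is therefore at most $d$. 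Hence $v_p(g_t(m))\ge 1$ for every $t$.

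The second key step is a combinatorial valuation count. For each $i$, the coefficient $s_i(m)$ of $w^i$ in $G_1\cdots G_s$ is a sum of products $\prod_{t=1}^{s}b_{t,j_t}$ taken over all choices with $j_1+\cdots+j_s=i$, where $b_{t,j}$ denotes the coefficient of $w^j$ in $G_t$ and $b_{t,0}=g_t(m)$. In any such product at most $i$ of the exponents $j_t$ can be positive, so at least $s-i$ of them vanish, and each vanishing exponent contributes a factor $b_{t,0}=g_t(m)$ with $v_p\ge 1$. Therefore every summand has $p$-adic valuation at least $s-i$, whence $v_p(s_i(m))\ge s-i$, i.e. $s\le i+v_p(s_i(m))$. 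Minimizing over $i$ yields $s\le\min_{0\le i\le n}\{i+v_p(s_i(m))\}$, which is the claimed bound. For the final assertions, $k=1$ forces the $i=0$ term $0+v_p(f(m))=k$ to equal $1$, while $p\nmid s_1(m)$ forces the $i=1$ term $1+v_p(s_1(m))$ to equal $1$; in either case $s\le 1$, so $f$ is irreducible.

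I expect the main obstacle to be the first step, namely forcing $p\mid g_t(m)$ for \emph{every} factor simultaneously. This is exactly where the a priori degree bound $\Delta_f$ and the hypothesis $(m-1-h_f)^{\Delta_f}\ge d$ are indispensable: without a lower bound on $\deg g_t$ one only obtains $|g_t(m)|>m-1-h_f$, which need not exceed $d$, and the divisibility conclusion collapses. Care is also needed to keep the inequality $|g_t(m)|>(m-1-h_f)^{\deg g_t}$ strict (guaranteed by the strict root bound $|\alpha|<h_f+1$) and to check that all irreducible factors of the primitive polynomial $f$ are indeed nonconstant of degree at least $\Delta_f$. Once $v_p(g_t(m))\ge 1$ is in hand, the remaining valuation count is routine.
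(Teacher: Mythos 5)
Your proof is correct and takes essentially the same route as the paper's: the paper likewise shifts to $g(z)=f(m+z)$, uses the root bound $|\theta|>h_f+1$ together with $\deg g_t\ge\Delta_f$ and the hypothesis $(m-1-h_f)^{\Delta_f}\ge d$ to force $|g_t(m)|>d$ and hence $p\mid g_t(m)$ for every irreducible factor (packaged there as Lemma \ref{L:2} and Lemma \ref{L:3}), and then runs the identical valuation count $v_p(s_i(m))\ge N-i$ on the coefficients of the product. No gaps.
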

The case $k=1=d$ and $j=n$ of Theorem \ref{th:3} is precisely the irreducibility result proved in the paper \cite[Proposition 1]{JA}.

As before, the upper bound mentioned in Theorem \ref{th:3} is best possible in the sense that explicit examples of polynomials attaining this bound exist. In view of this, we consider the polynomial
 \begin{eqnarray*}
 f=-2-4z+3z^2-2z^3+2z^4 \in \mathbb{Z}[z],
 \end{eqnarray*}
for which we have $h_{f}=2$. First observe that $f$ satisfies the hypothesis of Theorem \ref{th:5} for the prime number $2$, $j=2$, $d_1=d_2=1$.  Consequently, each irreducible factor of $f$ in $\mathbb{Z}[z]$ has degree at least $\min\{2/1,2/1\}=2$, that is, $\Delta_{f}=2$. Now applying Theorem \ref{th:3} to $f$ by taking $m=14$, we find that
 $s_0(m)=11^3\cdot 54$, which shows that $p=11$, $k=3$, and $d=54$, where
\begin{eqnarray*}
(m-1-h_{f})^{\Delta_{f}}=(14-1-2)^2=121>54=d.
\end{eqnarray*}
Thus, the polynomial $f$ is a product of at most $\min\{0+3,1+1,2+0,3+1,4+0\}=2$ irreducible factors in $\mathbb{Z}[z]$, where we find that $f$ factors as
		\begin{eqnarray*}
			f(z)=-2-4z+3z^2-2z^3+2z^4 =(2+z^2)(-1-2z+2z^2),
		\end{eqnarray*}
which is a product of exactly two irreducible polynomials over integers.

If instead the information about factorization of the leading coefficient of the underlying polynomial is used, then we may arrive at the following factorization result.
\begin{theorem}\label{th:4}
Let $f = a_0 + a_1z +\cdots+a_n z^n\in \Bbb{Z}[z]$ be a primitive polynomial with $a_0a_n\neq 0$.   Suppose there exist natural numbers $m$, $d$, $k$, and a prime $p\nmid d$ with
\begin{eqnarray*}
(m-1-h_f)^{\Delta_f}\geq d,
\end{eqnarray*}
such that $s_n(m)=\pm p^kd,~|s_0(m)/q|\leq |s_n(m)|$, where $q$ is the smallest prime divisor of $s_0(m)$. Then the polynomial $f$ is a product of at most
\begin{eqnarray*}
\min_{1\leq i\leq n}\{k,i+v_p(s_{n-i}(m))\}
\end{eqnarray*}
irreducible factors in $\mathbb{Z}[z]$. In particular, if $k=1$, or $p\nmid s_{n-1}(m)$, then $f$ is irreducible.
\end{theorem}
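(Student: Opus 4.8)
The plan is to pass to the translated polynomial $g(z):=f(z+m)=\sum_{i=0}^n s_i(m)z^i$, whose coefficients are exactly the $s_i(m)$ by Taylor's formula, so that its leading and constant coefficients are ${\rm lc}(g)=s_n(m)=\pm p^kd$ and $g(0)=s_0(m)=f(m)$. Since $z\mapsto z+m$ is a $\mathbb{Z}$-algebra automorphism of $\mathbb{Z}[z]$, the polynomial $g$ is again primitive and has the same number $r$ of irreducible factors as $f$; write $g=\pm g_1\cdots g_r$ with each $g_\ell\in\mathbb{Z}[z]$ primitive, irreducible, of degree $d_\ell\geq \Delta_f$. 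Then $\prod_\ell {\rm lc}(g_\ell)=s_n(m)$ and $\prod_\ell g_\ell(0)=s_0(m)$ up to sign, and the whole task is to control how each factor contributes to $v_p$ of the top coefficient $s_n(m)$.

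The first ingredient is an archimedean root bound. All zeros of $f$ lie in $|z|<h_f+1$, so every zero $\gamma$ of $g$ satisfies $|\gamma|>m-1-h_f\geq 1$, the last inequality being forced by $(m-1-h_f)^{\Delta_f}\geq d\geq 1$. Writing $g_\ell={\rm lc}(g_\ell)\prod_s(z-\gamma_{\ell,s})$ then yields the key inequality $|g_\ell(0)|>|{\rm lc}(g_\ell)|\,(m-1-h_f)^{d_\ell}$ for every $\ell$.

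The crux, and the step I expect to be the main obstacle, is to show that $p\mid {\rm lc}(g_\ell)$ for every $\ell$ (assuming $r\geq 2$; the case $r=1$ is trivial). In contrast with the constant-term setting of Theorem \ref{th:3}, where the root bound immediately gives $|g_\ell(0)|>d$ and hence $p\mid g_\ell(0)$, a factor here may be nearly monic, so its leading coefficient need not be divisible by $p$; excluding this is precisely the purpose of condition (iii). Suppose some $g_0$ has $p\nmid {\rm lc}(g_0)$. Then ${\rm lc}(g_0)$ divides the $p$-free part $d$, so $|{\rm lc}(g_0)|\leq d$. Since $|g_0(0)|>1$ divides $s_0(m)$, all its prime factors are $\geq q$, whence $q\leq |g_0(0)|$. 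Combining $|s_0(m)|/q\geq |s_0(m)|/|g_0(0)|=\prod_{\ell\neq 0}|g_\ell(0)|$ with the key inequality for the factors $\ell\neq 0$ and with $\prod_{\ell\neq 0}|{\rm lc}(g_\ell)|=p^kd/|{\rm lc}(g_0)|$, I obtain $|s_0(m)|/q>\frac{p^kd}{|{\rm lc}(g_0)|}(m-1-h_f)^{\,n-d_0}$. Feeding this into condition (iii), namely $|s_0(m)|/q\leq p^kd$, gives $(m-1-h_f)^{\,n-d_0}<|{\rm lc}(g_0)|\leq d\leq (m-1-h_f)^{\Delta_f}$; but the complementary factor $g/g_0$ is a product of $r-1\geq 1$ irreducible factors, so $n-d_0\geq \Delta_f$, and since $m-1-h_f\geq 1$ this forces $(m-1-h_f)^{\,n-d_0}\geq (m-1-h_f)^{\Delta_f}$, a contradiction. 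Hence $p\mid {\rm lc}(g_\ell)$ for all $\ell$.

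With every ${\rm lc}(g_\ell)$ divisible by $p$, the count follows from a combinatorial valuation estimate on the top coefficients of the product. Reading off the coefficient of $z^{\,n-i}$ in $g=\prod_\ell g_\ell$, each monomial contribution has total degree-deficit $i$ from the leading terms, and since each factor contributing below its leading term costs deficit $\geq 1$, at most $i$ factors do so; thus at least $r-i$ factors contribute their leading coefficient, each of $p$-valuation $\geq 1$. This gives $v_p(s_{n-i}(m))\geq r-i$, i.e. $r\leq i+v_p(s_{n-i}(m))$ for $1\leq i\leq n$. Moreover the $z^n$ coefficient is $\prod_\ell {\rm lc}(g_\ell)$, of valuation $\geq r$ and equal to $v_p(s_n(m))=k$, so $r\leq k$. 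Minimizing over all these bounds yields $r\leq \min_{1\leq i\leq n}\{k,\,i+v_p(s_{n-i}(m))\}$, and both special hypotheses $k=1$ and $p\nmid s_{n-1}(m)$ force this minimum to equal $1$, giving irreducibility.
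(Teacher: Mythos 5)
Your proposal is correct and follows essentially the same route as the paper: translate to $g(z)=f(m+z)$, use the root bound together with condition (iii) to force $p$ to divide the leading coefficient of every irreducible factor, and then run the valuation count on the coefficients $s_{n-i}(m)$ to get $r\leq\min\{k,\,i+v_p(s_{n-i}(m))\}$. The only cosmetic difference is that you establish the leading-coefficient divisibility by contradiction for a single hypothetical $p$-free factor, whereas the paper (in its Lemma~\ref{L:5}) derives $|\alpha_i|>d$ directly for every factor from the same product-ratio inequality.
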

The upper bound as mentioned in Theorem \ref{th:4} is taken up by the polynomial
\begin{eqnarray*}
f=9-36z+54z^2-2094z^3+4125z^4-2058z^5+117649z^6,
\end{eqnarray*}
where  $h_{f}<1$. To see this, we first observe that $f$ satisfies the hypotheses of Theorem \ref{th:5} for the prime number $3$, $j=n=6$, and $d_1=2$, which yields $\Delta_{f}=3$. Now  if we take $m=7$, then we find that $s_6(7)=7^6$, and so, we have $p=7$, $k_1=6$, and $d=1$. Further, $s_0(7)=117541^2$, and so, $q=117541$, and we have $s_0(7)/q =117541<117649=s_6(7)$. Since  $(m-1-h_{f})^{\Delta_{f}}>(7-1-1)^3=125>1$, by Theorem \ref{th:4}, we deduce that the polynomial $f$ is a product of at most $\min\{6,1+3,2+0,3+0,4+0,5+0,6+0\}=2$ irreducible factors in $\mathbb{Z}[z]$, where a direct computation confirms that
		\begin{eqnarray*}
			f(z)=9-36z+54z^2-2094z^3+4125z^4-2058z^5+117649z^6 = (-3+6z-3z^2+343z^3)^2,
		\end{eqnarray*}
which is a product of exactly two irreducible polynomials in $\mathbb{Z}[z]$.
\section{Examples}
Below we exhibit some explicit examples of polynomials over integers whose factorization properties are deducible from Theorems \ref{th:1}-\ref{th:4}.
\begin{example} The polynomial
\begin{eqnarray*}
F_1=1287 + 3168 z^2 - 3528 z^3 + 1936 z^4 - 4312 z^5 + 2401 z^6\in \mathbb{Z}[z]
\end{eqnarray*}
satisfies the hypothesis of Theorem \ref{th:1} for $m=4>h_{F_1}+2$, since here $s_0(4)=2393\times 2399$ is a product of the two primes $p=2393$ and $p_2=2399$, and so, $r=2$, $k_1=1=k_2$. Further, we have $s_1(4)=2^7\times 5^3\times 599$, which is coprime to $s_0(4)$,and so,  $j_1=j_2=1$. So, the polynomial $F_1$ is a product of at most two irreducible factors in $\mathbb{Z}[z]$, where we find that the polynomial $F_1$ is a product of the two irreducible polynomials $39+44z^2-49z^3$ and $33+44z^2-49z^3$. The same conclusion for the polynomial $F_1$ was obtained in the paper \cite[Theorem 3]{R-J-S}.
\end{example}
\begin{example}
Consider the polynomial
\begin{eqnarray*}
F_2=4 + 120 z^2 + 899 z^4,
\end{eqnarray*}
which satisfies the hypothesis of Theorem \ref{th:2} for $m=3>h_{F_2}+2$ so that $s_0(3)=263\times 281$, where each of $263$ and $281$ is a prime so that $q=263$, and $s_0(3)/q=281<899=s_4(3)$. Since $s_4(3)=899=29\times 31$, we have $p=29$, $p_2=31$, $k_1=k_2=1$, and since $v_{p_i}(s_3(3))=1$ for each $i=1,2$ and $s_2(3)=2\times 3\times 8111$ which is coprime to $s_4(3)$, it follows that $j_1=j_2=2$. Finally, we have $k_i/j_i=1/2<1=v_{p_i}(s_{4-i}(3))/(2-1)$ for each $i=1,2$. Thus, the polynomial $F_2$ is a product of at most $2$ irreducible factors in $\mathbb{Z}[z]$, where we find that
 \begin{eqnarray*}
F_2(z)=(2 + 29 z^2) (2 + 31 z^2),
\end{eqnarray*}
which is a product of exactly two irreducible polynomials in $\mathbb{Z}[z]$.
\end{example}
\begin{example}
Now consider the polynomial
\begin{eqnarray*}
F_3=4-16z+32z^2+4z^3-56z^4+72z^5+81z^6,
\end{eqnarray*}
which satisfies the hypotheses of Theorem \ref{th:5} for the prime number $2$, $j=n=6$, $d_1=2$, where we note that  $h_{F_3}=72/81<1$. So, we have $\Delta_{F_3}=3$. Observe that the polynomial $F_3$ satisfies the hypothesis of Theorem \ref{th:3} for  		
$m=4$ so that $s_0(4)=313^2\cdot4$, where $p=313$, $k=2$, and $d=4$, where
\begin{eqnarray*}
(m-1-h_{F_3})^{\Delta_{F_3}}>(4-1-1)^3=8>4=d.
\end{eqnarray*}
We deduce that the polynomial $F_3$ is a product of at most $\min\{0+2,1+1,2+0,3+0,4+0\}=2$ irreducible factors in $\mathbb{Z}[z]$, where we find that
		\begin{eqnarray*}
			F_3(z)=(2-4z+4z^2+9z^3)^2,
		\end{eqnarray*}
which is a product of exactly two irreducible factors in $\mathbb{Z}[z]$.
	\end{example}
\begin{example}The polynomial
\begin{eqnarray*}
F_4=2-2z+2z^2-375z^3+100z^4-100z^5+100z^6-18750z^7
\end{eqnarray*}
satisfies the hypotheses of Theorem \ref{th:5} for the prime number $2$, $j=3$, $d_1=1$, and $d_2=1$, and so, we have $\Delta_{F_4}=3$. Now observe that the polynomial $F_4$ satisfies the hypothesis of Theorem \ref{th:4} for $m=3>h_{F_4}+1$, since $s_7(3)=5^5\cdot 6$ so that $p=5$, $k_1=5$, and $d=6$. Further, $s_0(3)=-4051\cdot 10111$, where $4051$ and $10111$ are primes so that $q=4051$ and we have  $|s_0(3)/q| =10111<18750=|s_7(3)|$. Since $(m-1-h_{F_4})^{\Delta_{F_4}}=(3-1-0.02)^3\approx 7.76>6$, by Theorem \ref{th:4}, we deduce that the polynomial $F_4$ is a product of at most $\min\{5,1+2,2+2,3+2,4+2,5+0,6+1,7+0\}=3$ irreducible factors in $\mathbb{Z}[z]$, where we find that $F_4$ factors as
	\begin{eqnarray*}
		F_4(z)=(2-2z+2z^2-375z^3)(1+50z^4),
	\end{eqnarray*}
which is product of exactly two irreducible factors.
	\end{example}
\section{Proofs}
The following result of the paper \cite[Lemma 9]{Singh25} is indispensable for the proof of Theorem \ref{th:1}.
\begin{lemmax}\label{L:1}
Let $g=s_0 + s_1z + \cdots + s_nz^n \in \mathbb{Z}[z]$ be a polynomial of degree $n$ such that there exists an index  $j$ with $1\leq j\leq n$ for which the following hold.
\begin{enumerate}[label=$(\roman*)$]
\item $v_p(s_j)=0$,
\item If $j>1$, then $\frac{v_p(s_0)}{j}<\frac{v_p(s_i)}{j-i}$ for each $i=1,\ldots, j-1$,
\item $\gcd(v_p(s_0),~j) = 1$.
\end{enumerate}
Then any factorization $g(z)=g_1(z)g_2(z)$ of the polynomial $g$ in $\mathbb{Z}[z]$ satisfies $v_p(g_1(0))=0$, or $v_p(g_2(0))=0$.
\end{lemmax}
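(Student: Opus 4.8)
The plan is to read the conclusion off the shape of the $p$-adic Newton polygon of $g$ together with Dumas' additivity theorem. Let $\mathrm{NP}(g)$ denote the lower convex hull of the points $(i,v_p(s_i))$ for $0\le i\le n$; since $s_0\neq0$ and $s_n\neq0$, the endpoints $(0,v_p(s_0))$ and $(n,v_p(s_n))$ are genuine vertices. Rewriting condition (ii) as $v_p(s_i)>v_p(s_0)(j-i)/j$, I first observe that it says exactly that every point $(i,v_p(s_i))$ with $0<i<j$ lies strictly above the segment $S$ joining $(0,v_p(s_0))$ and $(j,0)$. Combined with condition (i), which places $(j,0)$ on the polygon, and the fact that all plotted points have nonnegative ordinate (so no point with $i>j$ produces a slope as steep as $-v_p(s_0)/j$ when $v_p(s_0)>0$), this forces $S$ to be the initial, leftmost, steepest edge of $\mathrm{NP}(g)$, of slope $-v_p(s_0)/j$. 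By condition (iii) this slope is already in lowest terms, with denominator $j$.

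Next I would invoke Dumas' theorem: for any factorization $g=g_1g_2$ in $\mathbb{Z}[z]$, the edge system of $\mathrm{NP}(g)$ is obtained by juxtaposing the edges of $\mathrm{NP}(g_1)$ and $\mathrm{NP}(g_2)$ in order of increasing slope. Hence only the minimal-slope edges of $g_1$ and $g_2$ can contribute to the initial edge $S$, and the horizontal lengths of their contributions, say $\ell_1$ and $\ell_2$, satisfy $\ell_1+\ell_2=j$. The crucial point is that each contribution is a genuine edge of some $\mathrm{NP}(g_i)$ running between two lattice-point vertices with slope $-v_p(s_0)/j$; since this slope is in lowest terms with denominator $j$, any such edge has horizontal length divisible by $j$. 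Thus $\ell_1,\ell_2\in\{0,j,2j,\dots\}$, and $\ell_1+\ell_2=j$ leaves only the possibility $\{\ell_1,\ell_2\}=\{0,j\}$.

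Finally, say $\ell_2=0$ and $\ell_1=j$. Then $S$ is literally the initial edge of $\mathrm{NP}(g_1)$: it starts at the leftmost vertex $(0,v_p(g_1(0)))$, has slope $-v_p(s_0)/j$ and horizontal length $j$, and so terminates at the vertex $(j,\,v_p(g_1(0))-v_p(s_0))$. Because every vertex of a Newton polygon has nonnegative ordinate, while $v_p(s_0)=v_p(g_1(0))+v_p(g_2(0))$, this terminal ordinate equals $-v_p(g_2(0))\ge0$, forcing $v_p(g_2(0))=0$; the symmetric case $\ell_1=0$ yields $v_p(g_1(0))=0$. The degenerate case $v_p(s_0)=0$, in which (iii) forces $j=1$, is immediate, since then $v_p(g_1(0))+v_p(g_2(0))=0$ makes both valuations vanish.

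I expect the main obstacle to be making rigorous the claim that the initial edge cannot be split: that is, the lattice-point-plus-lowest-terms argument which shows each contribution $\ell_i$ lies in $j\mathbb{Z}$, and pinning down the precise form of Dumas' theorem (juxtaposition of edges by slope between integral vertices) so that the decomposition $\ell_1+\ell_2=j$ and the terminal-ordinate computation are fully justified.
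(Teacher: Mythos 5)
Your argument is correct: conditions (i)--(iii) do pin down the initial edge of the $p$-adic Newton polygon of $g$ as the segment from $(0,v_p(s_0))$ to $(j,0)$ with slope in lowest terms, Dumas' additivity then forces that edge to come entirely from one factor, and the terminal-ordinate computation cleanly converts that into $v_p(g_i(0))=0$ for the other factor. Note that the paper itself gives no proof of this lemma --- it is imported verbatim as Lemma 9 of \cite{Singh25} --- but your Newton-polygon route is exactly the argument that source's title (``a generalization of Dumas irreducibility criterion'') indicates, so there is nothing to flag beyond making sure the version of Dumas' theorem you cite permits a constant factor (your terminal-ordinate step already rules out $p$ dividing such a constant, so this costs nothing).
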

The following lemma will also we utilized in the sequel for proving our results.
\begin{lemma}\label{L:2}
Let $f=a_0+a_1z+\cdots+a_nz^n\in \mathbb{Z}[z]$ be a primitive polynomial with $a_0a_n\neq 0$. Let $m$ and $d$ be positive integers such that $m\geq h_f+1+d$. If $g(z)=f(m+z)$, then $g\in \mathbb{Z}[z]$ is such that each zero $\theta$ of $g$ in $\mathbb{C}$ satisfies $|\theta|>d$.
\end{lemma}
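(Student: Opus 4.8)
The plan is to reduce the assertion to the classical fact, recalled in the introduction (see \cite{Mu}), that every complex zero of $f$ lies in the open disc $|z|<h_f+1$. First I would settle the two structural claims about $g$. Expanding each power $(m+z)^i$ by the binomial theorem and using that $m\in\mathbb{Z}$ shows that every coefficient of $g(z)=f(m+z)$ is an integer combination of $a_0,\dots,a_n$, so $g\in\mathbb{Z}[z]$; moreover $z\mapsto m+z$ is an invertible affine substitution over $\mathbb{C}$, whence $g$ has degree $n$ with leading coefficient $a_n$, and its zeros in $\mathbb{C}$ are exactly the numbers $\theta=\zeta-m$ as $\zeta$ runs over the zeros of $f$.

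Next I would record the zero bound for $f$. If a self-contained argument is wanted, it comes from the identity $a_n\zeta^n=-\sum_{i=0}^{n-1}a_i\zeta^i$ at a zero $\zeta$ with $|\zeta|>1$: taking moduli and using $|a_i|\leq h_f|a_n|$ for $i\leq n-1$ gives $|a_n||\zeta|^n\leq h_f|a_n|\frac{|\zeta|^n-1}{|\zeta|-1}<h_f|a_n|\frac{|\zeta|^n}{|\zeta|-1}$, so that $|\zeta|-1<h_f$; the remaining case $|\zeta|\leq 1$ is immediate (with the degenerate case $h_f=0$, i.e.\ $f=a_nz^n$, having the sole zero $0$). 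Hence $|\zeta|<h_f+1$ for every zero $\zeta$ of $f$.

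Finally I would combine the two facts. Fix a zero $\theta=\zeta-m$ of $g$ with $f(\zeta)=0$. Since $m\geq h_f+1+d$ forces $|\zeta|<h_f+1\leq m-d<m$, the reverse triangle inequality gives $|\theta|=|\zeta-m|\geq m-|\zeta|$. Using the strict bound $|\zeta|<h_f+1$ together with $m-(h_f+1)\geq d$ then yields $|\theta|\geq m-|\zeta|>m-(h_f+1)\geq d$, that is $|\theta|>d$, which is the claim.

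I do not expect a genuine obstacle here, as the proof is a short chain of estimates resting entirely on the known localization of the roots of $f$. The only points demanding care are tracking strictness—it is precisely the openness of the disc $|z|<h_f+1$ that propagates to the strict conclusion $|\theta|>d$—and confirming $|\zeta|<m$, so that the reverse triangle inequality produces $m-|\zeta|$ rather than $|\zeta|-m$.
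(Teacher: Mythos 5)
Your proof is correct and follows essentially the same route as the paper: identify the zeros of $g$ as $\zeta-m$ for zeros $\zeta$ of $f$, invoke the bound $|\zeta|<h_f+1$, and apply the reverse triangle inequality with $m\geq h_f+1+d$. The only difference is that you supply a self-contained derivation of the root bound, which the paper simply cites from \cite{Mu}.
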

\begin{proof}[\bf Proof of Lemma \ref{L:2}]
We have $f(m+\theta)=g(\theta)=0$, which shows that $m+\theta$ is a zero of the polynomial $f$. Since all zeros of $f$ lie within the open disc $|z|<h_f+1$ centered at the origin in the complex plane, it follows that
\begin{eqnarray*}
h_f+1>|m+\theta|\geq m-|\theta|\geq h_f+1+d-|\theta|,
\end{eqnarray*}
which shows that $|\theta|>d$, as desired.
\end{proof}
We will now prove Theorem \ref{th:1} as follows.
\begin{proof}[\bf Proof of Theorem \ref{th:1}] Define $g(z)=f(m+z)$ so that $g=s_0(m)+s_1(m)z+\cdots+s_n(m)z^n\in \mathbb{Z}[z]$. Note that $\gcd(s_0(m),\ldots, s_n(m))=1$, since $v_{p_i}(s_{j_i}(m))=0$ for all $i=1,\ldots, r$, and as a consequence of this, the polynomial $g$ is primitive. We further note that the number of irreducible factors of $g$ and $f$ in $\mathbb{Z}[z]$ is the same. In view of this, it will be sufficient to establish that the number of irreducible factors of $g$ over integers is at most $r$. We proceed as follows.  Let $g(z)=g_1(z)\cdots g_N(z)$ be a factorization of the polynomial $g$ into a product of $N$ irreducible factors $g_1,\ldots,g_N$ in $\mathbb{Z}[z]$. We suppose on the contrary that $N>r$. In view of the fact that
\begin{eqnarray*}
p_1^{k_1}\cdots p_r^{k_r}=|f(m)|=|g(0)|=|g_1(0)|\cdots |g_N(0)|,
\end{eqnarray*}
and Lemma \ref{L:2} for $d=1$, we find that  $|g_j(0)|>1$ for each $j=1,\ldots, N$. This on applying the pigeon-hole principle tells us that there exists an index $i\in \{1,\ldots,r\}$ and two distinct indices $\ell_1$ and $\ell_2$ in the set $\{1,\ldots, N\}$ for which $p_i$ divides $g_{\ell_t}(0)=s_{\ell_t}(m)$ for each $t=1,2$. We then have $v_{p_i}(g_{\ell_t}(0))>0$ for each $t=1,2$. Now if we take a partition of the set $\{1,\ldots, N\}$ into two disjoint sets $P_1$ and $P_2$ such that $\ell_t\in P_t$  for $t=1,2$, then we may have the following factorization of $g$.
 \begin{eqnarray*}
g(z)=\Bigl(\prod_{a\in P_1}g_a(z)\Bigr)\Bigl(\prod_{b\in P_2}g_b(z)\Bigr),
  \end{eqnarray*}
  where $G_t(z)=\prod_{a\in P_t}g_a(z)$ is a polynomial  in $\mathbb{Z}[z]$ for each $t=1,2$, such that $g(z)=G_1(z)G_2(z)$ with $v_{p_i}(G_t(0))\geq v_{p_i}(g_{\ell_t}(0))>0$ for each $t=1,2$. This contradicts Lemma \ref{L:1}.
\end{proof}
To prove Theorem \ref{th:2}, we first prove the following factorization result, which may be of independent interest as well.
\begin{lemma}\label{L:4}
Let $g=s_0+s_1 z +\cdots+s_nz^n\in \mathbb{Z}[z]$ be a primitive polynomial such that $s_0s_n\neq 0$ and all zeros of $g$ lie in the region $|z|>1$ in the complex plane. Suppose that  $s_n=\pm p_1^{k_1}\cdots p_r^{k_r}$ for primes $p_1,\ldots, p_r$  which are all distinct for $r\geq 2$ and $k_1,\ldots,k_r$ are all positive integers. For each $i=1,\ldots, r$, let $j_i\leq n$ be a positive integer for which the following conditions are satisfied.
\begin{enumerate}
\item[(i)] $v_{p_i}(s_{n-j_i})=0$, and $\gcd(k_i,j_i)=1$,
\item[(ii)] If $j_i>1$, then $\frac{k_i}{j_i}<\frac{v_{p_i}(s_{n-t})}{j_i-t}$ for each $t=1,\ldots,j_i-1$,
\item[(iii)] $|s_0/q|\leq |s_n|$, where $q$ is the smallest prime divisor of $|s_0|$.
\end{enumerate}
Then $g$ is a product of at most $r$ irreducible polynomials in $\mathbb{Z}[z]$. In particular, if $r=1$, then $g$ is irreducible.
\end{lemma}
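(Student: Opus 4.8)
The plan is to reduce Lemma \ref{L:4} to Lemma \ref{L:1} by passing to the reciprocal polynomial. I would set $\tilde g(z)=z^n g(1/z)=s_n+s_{n-1}z+\cdots+s_0z^n$. Since $s_0\neq 0$, the polynomial $\tilde g$ has degree $n$ and is primitive (its coefficient list is that of $g$ reversed), and since the zeros of $\tilde g$ are the reciprocals of the zeros of $g$, they all lie in the region $|z|<1$. The constant term of $\tilde g$ is $s_n=\pm p_1^{k_1}\cdots p_r^{k_r}$, so $v_{p_i}(\tilde g(0))=k_i$. Writing $\tilde s_t=s_{n-t}$ for the coefficient of $z^t$ in $\tilde g$, I would check that for each $i$ the polynomial $\tilde g$ satisfies the three hypotheses of Lemma \ref{L:1} with prime $p_i$ and index $j_i$: condition (i) gives $v_{p_i}(\tilde s_{j_i})=v_{p_i}(s_{n-j_i})=0$ and $\gcd(v_{p_i}(\tilde s_0),j_i)=\gcd(k_i,j_i)=1$, while condition (ii) reads $\frac{v_{p_i}(\tilde s_0)}{j_i}<\frac{v_{p_i}(\tilde s_t)}{j_i-t}$ for $1\leq t\leq j_i-1$. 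Thus Lemma \ref{L:1} is applicable to $\tilde g$ for every $i$.

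Mimicking the proof of Theorem \ref{th:1}, I would then suppose $g=g_1\cdots g_N$ is a factorization into $N$ irreducible (hence nonconstant, as $g$ is primitive) polynomials of $\mathbb{Z}[z]$ and assume $N>r$. Reversing each factor gives $\tilde g=\tilde g_1\cdots\tilde g_N$, where $\tilde g_\ell(0)$ equals the leading coefficient $\mathrm{lc}(g_\ell)$ of $g_\ell$ (note $g_\ell(0)\neq 0$, since $0$ is not a zero of $g$). If some prime $p_i$ divided $\mathrm{lc}(g_{\ell_1})$ and $\mathrm{lc}(g_{\ell_2})$ for two distinct indices, then partitioning the factors into two products $A,B$ with $\ell_1,\ell_2$ in different parts would yield a factorization $\tilde g=AB$ in $\mathbb{Z}[z]$ with $p_i\mid A(0)$ and $p_i\mid B(0)$, contradicting Lemma \ref{L:1}. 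Hence each $p_i$ divides the leading coefficient of at most one factor; since every factor with $|\mathrm{lc}(g_\ell)|>1$ is divisible by at least one $p_i$ (because $\mathrm{lc}(g_\ell)$ divides $s_n$) and these primes are used disjointly, the number of factors with non-unit leading coefficient is at most $r$.

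Finally, I would bring in the zero localization and condition (iii). For each $\ell$, since $g_\ell$ is nonconstant with all zeros of modulus exceeding $1$, one has $|g_\ell(0)|=|\mathrm{lc}(g_\ell)|\prod_\theta|\theta|>|\mathrm{lc}(g_\ell)|$; in particular each factor with unit leading coefficient satisfies $|g_\ell(0)|>1$, and as $g_\ell(0)$ divides $s_0$ its smallest prime factor is at least $q$, so $|g_\ell(0)|\geq q$. Because $N>r$ while at most $r$ factors have non-unit leading coefficient, at least one factor has unit leading coefficient. Writing $S$ for the set of factors with $|\mathrm{lc}(g_\ell)|>1$ (so $\prod_{\ell\in S}|\mathrm{lc}(g_\ell)|=|s_n|$ and $S\neq\emptyset$ as $|s_n|\geq 2$), the strict estimate above gives $\prod_{\ell\in S}|g_\ell(0)|>|s_n|$, while $\prod_{\ell\notin S}|g_\ell(0)|\geq q$; as these are integers, $|s_0|\geq(|s_n|+1)q>q|s_n|$, contradicting $|s_0/q|\leq|s_n|$. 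The main obstacle, and the precise role of hypothesis (iii), lies exactly here: unlike in Theorem \ref{th:1}, where Lemma \ref{L:2} forces every factor's constant term to exceed $1$, the leading coefficients of the factors may now be units, so the pigeonhole argument over the primes $p_i$ only caps the number of factors with non-unit leading coefficient; it is the zero-modulus estimate together with (iii) that excludes the remaining unit-leading-coefficient factors.
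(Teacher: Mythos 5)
Your proof is correct and follows essentially the same route as the paper's: pass to the reciprocal polynomial $z^ng(1/z)$, apply Lemma \ref{L:1} with the partition trick to show each $p_i$ can divide the leading coefficient of at most one irreducible factor, and use hypothesis (iii) together with the zero localization $|z|>1$ to rule out factors with unit leading coefficient. The only difference is cosmetic: the paper first shows every $|\alpha_i|>1$ via the inequality $|g_i(0)/q|<|\alpha_i|$ and then pigeonholes, whereas you pigeonhole first and then derive the arithmetic contradiction $|s_0|\geq q(|s_n|+1)>q|s_n|$ from a putative unit-leading-coefficient factor.
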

The polynomial
\begin{eqnarray*}
g=128 + 120 z^2 - 113 z^4 - 105 z^6
\end{eqnarray*}
satisfies the hypothesis of Lemma \ref{L:4}, since here, the leading term of $g$ is $s_6=105=3\times 5\times 7$, which
 is a product of $r=3$ primes so that $k_1=k_2=k_3=1$, and for $q=2$, we have $s_0/q=128/2=64<105=s_6$. Since $\gcd(s_4,105)=1$, we must have $j_1=j_2=j_3=2$ and $k_i/j_i=\frac{1}{2}< \infty=\frac{v_{p_i}(s_{5})}{2-1}$ for each $i=1,2,3$. Thus, the polynomial $g$ is a
product of at most $3$ irreducible factors in $\mathbb{Z}[z]$, where we find that
\begin{eqnarray*}
128 + 120 z^2 - 113 z^4 - 105 z^6=(1 + z^2) (8 + 7 z^2) (16 - 15 z^2),
\end{eqnarray*}
which is a product of exactly 3 irreducible polynomials in $\mathbb{Z}[z]$.
\begin{proof}[\bf Proof of Lemma \ref{L:4}]
Let $g(z)=g_1(z)\cdots g_N(z)$ be a product of $N$ irreducible factors $g_1$, $\ldots$, $g_N$ in $\mathbb{Z}[z]$. Since each zero of $g$ lies inside the region $|z|>1$, it follows that $|s_0|>|s_n|$ so that $q$ exists,  and that $|g_i(0)|>1$ for each $i=1,\ldots,N$. Let $\alpha_i$ be the leading coefficient of $g_i$ for each $i=1,\ldots, N$, so that we have $\alpha_1\cdots  \alpha_N=s_n=p_1^{k_1}\cdots p_r^{k_r}$. Using the hypothesis, we have
\begin{eqnarray*}
|s_0/q| \leq |s_n|&<&|s_n|\Bigl(\frac{|g_1(0)|}{|\alpha_1|}\cdots \frac{|g_N(0)|}{|\alpha_N|}\Bigr)\times \frac{|\alpha_i|}{|g_i(0)|}= |s_0|\frac{|\alpha_i|}{|g_i(0)|},
\end{eqnarray*}
which shows that $|g_i(0)/q|<|\alpha_i|$ for each $i\in\{1,\ldots,N\}$. Since  $|g_i(0)/q|\geq 1$, for each index $i$, we must have $|\alpha_i|>1$ for each $i$. This in view of the fact that $\pm p_1^{k_1}\cdots p_r^{k_r}=s_n=\alpha_1\cdots \alpha_N$ shows that for each $i=1,\ldots,N$, there exists a prime $p_{t_i}$ dividing $\alpha_i$ for some $t_i\in \{1,\ldots,r\}$.
Now suppose on the contrary that $N>r$. Then there exists an index $i\in \{1,\ldots,r\}$ and two distinct indices $\ell_1$ and $\ell_2$ in the set $\{1,\ldots, N\}$ for which $p_i$ divides $\alpha_{\ell_t}$ for each $t=1,2$. Now if we take a partition of the set $\{1,\ldots, N\}$ into two disjoint sets $P_1$ and $P_2$ such that $\ell_t\in P_t$  for $t=1,2$, and if we let   $G(z)=z^{n}g(1/z)$, we find that $G\in \mathbb{Z}[z]$ and that $G$ satisfies the hypothesis of Lemma \ref{L:1} with $G(0)=\alpha_1\cdots \alpha_N$, and
 \begin{eqnarray*}
G(z)=\Bigl(\prod_{a\in P_1}z^{\deg g_a}g_a(1/z)\Bigr)\Bigl(\prod_{b\in P_2}z^{\deg g_b}g_b(1/z)\Bigr),
  \end{eqnarray*}
  where $G_t(z)=\prod_{a\in P_t}z^{\deg g_a}g_a(1/z)$ is a polynomial  in $\mathbb{Z}[z]$ for each $t=1,2$, such that $G(z)=G_1(z)G_2(z)$ with
  \begin{eqnarray*}
v_{p_i}(G_t(0))\geq v_{p_i}(\alpha_{\ell_t})>0,
  \end{eqnarray*}
for each $t=1,2$. This contradicts  Lemma \ref{L:1}.
\end{proof}
Using Lemma \ref{L:4}, we now prove Theorem \ref{th:2} as follows.
\begin{proof}[\bf Proof of Theorem \ref{th:2}] Define $g(z)=f(m+z)$, so that $g=s_0(m)+s_1(m)z+\cdots +s_n(m)z^n$. Using Lemma \ref{L:2} for $d=1$, we find that all zeros of $g$ lie in the region $|z|>1$ in the complex plane. Thus, coefficients of $g$ satisfy the hypothesis of Lemma \ref{L:4}. Consequently, $g$ and hence $f$ is a product of at most $r$ irreducible polynomials in $\mathbb{Z}[z]$.
\end{proof}
To prove Theorem \ref{th:5}, we will make use of the following fundamental result resting on Dumas Theorem \cite{D} on Newton polygons.
\begin{thmx}[Dumas \cite{D}]\label{th:B}
Let $f=a_0+a_1 z+\cdots +a_n z^n\in \mathbb{Z}[z]$ be such that $a_0a_n\neq 0$. Suppose there exist nonconstant polynomials $f_1,\ldots, f_r\in \mathbb{Z}[z]$ such that $f(z)=f_1(z)\cdots f_r(z)$. Let $p$ be a prime number. Let $\{A_i(x_i,y_i) ~|~ 0\leq i\leq k, x_0=0<x_1<\cdots <x_k=\deg f\}$ be the list of $k+1$ lattice points on Newton polygon $NP(f)$ with respect to the prime $p$. For each $s\in \{1,\ldots, k\}$, let $b_s=x_s-x_{s-1}$ and $\{\epsilon_{ts} ~|~ 1\leq t\leq r, 1\leq s\leq k\}=\{0,1\}$. Then for each $s\in \{1,\ldots, k\}$, there exists unique $i\in \{1,\ldots,r\}$ for which $\epsilon_{is}=1$ and that for each $i=1,\ldots, r$, we have
\begin{eqnarray*}
\deg f_i =\epsilon_{i1}b_1+\epsilon_{i2}b_2+\cdots+\epsilon_{ik}b_k.
\end{eqnarray*}
\end{thmx}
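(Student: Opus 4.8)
The entire statement is a lattice-point bookkeeping consequence of the single structural fact that the Newton polygon of a product is the Minkowski sum of the Newton polygons of its factors. By associativity of multiplication it suffices to prove this for two factors and induct on $r$, so the first task I set myself is to show that for $g,h\in\mathbb{Z}[z]$ with $g(0)h(0)\neq 0$ one has $NP(gh)=NP(g)+NP(h)$, where $+$ denotes the Minkowski sum of the two lower convex polygonal chains; equivalently, $NP(gh)$ is obtained by concatenating all edges of $NP(g)$ and $NP(h)$ in order of increasing slope. Granting this, the edge system of $NP(f)$ is the disjoint union of the edge systems of the $NP(f_i)$, and the numbers $\epsilon_{is}$ will simply record which factor each primitive lattice segment of $NP(f)$ comes from.

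To prove the Minkowski-sum identity I would argue by a two-sided comparison. Writing $g=\sum_i a_iz^i$, $h=\sum_j b_jz^j$, $gh=\sum_m c_mz^m$ with $c_m=\sum_{i+j=m}a_ib_j$, and setting $\nu_g(i)=v_p(a_i)$, $\nu_h(j)=v_p(b_j)$, the ultrametric inequality gives $v_p(c_m)\ge\min_{i+j=m}\bigl(\nu_g(i)+\nu_h(j)\bigr)$. The right-hand side is the min-plus convolution of $\nu_g$ and $\nu_h$, whose lower convex hull is exactly $M:=NP(g)+NP(h)$ by the standard fact that inf-convolution of convex functions corresponds to Minkowski sum of epigraphs. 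Hence every point $(m,v_p(c_m))$ lies on or above the chain $M$, and therefore so does their lower hull $NP(gh)$.

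For the reverse containment I would show that $NP(gh)$ passes through every vertex of $M$, after which convexity forces equality: a convex chain lying on or above $M$ and meeting all of its vertices cannot rise above any edge of $M$, since that edge is the straight chord between two such vertices while all coefficient points lie above it. The decisive point is that at a vertex $m^\ast$ of $M$ the minimizing decomposition is unique. Indeed, the supporting slopes of a vertex of $M$ fill an open interval lying between two consecutive slopes in the merged multiset of edge slopes of $g$ and $h$; shared slopes merge into edges rather than producing vertices, so this interval contains no edge slope of either chain. For a slope $\lambda$ in it, the minimizers of the functional $x\mapsto\nu_{\bullet}(x)+\lambda x$ on $NP(g)$ and on $NP(h)$ are single vertices $i^\ast$ and $j^\ast$; since $\lambda m^\ast$ is constant on the fibre $i+j=m^\ast$, the pair $(i^\ast,j^\ast)$ is then the unique decomposition of $m^\ast$ minimizing $\nu_g(i)+\nu_h(j)$. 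Consequently $c_{m^\ast}$ has exactly one term $a_{i^\ast}b_{j^\ast}$ of least $p$-adic valuation, so $v_p(c_{m^\ast})=\nu_g(i^\ast)+\nu_h(j^\ast)=M(m^\ast)$ and $NP(gh)$ meets $M$ at $m^\ast$. This settles the case $r=2$, and induction yields the general edge decomposition.

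Finally I would carry out the lattice accounting. For a slope $\lambda$ occurring in $NP(f)$, let $e_\lambda$ be the denominator of $\lambda$ in lowest terms; since each $NP(f_i)$ has lattice endpoints, the slope-$\lambda$ portion of $NP(f_i)$ has horizontal length $L_\lambda^{(i)}$ that is a multiple of $e_\lambda$, and these lengths sum over $i$ to the slope-$\lambda$ length of $NP(f)$. The primitive lattice segments $[A_{s-1},A_s]$ of $NP(f)$ of slope $\lambda$ each have horizontal width $b_s=e_\lambda$, so I assign exactly $L_\lambda^{(i)}/e_\lambda$ of them to $f_i$ and set $\epsilon_{is}=1$ precisely for the segments so assigned. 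Each segment then belongs to a unique factor, and summing widths gives $\sum_s\epsilon_{is}b_s=\sum_\lambda L_\lambda^{(i)}=\deg f_i$, the last equality using $f_i(0)\neq 0$ (which holds since $a_0\neq 0$), so that $NP(f_i)$ spans $[0,\deg f_i]$ horizontally. The main obstacle is the uniqueness-of-minimizer step: correctly showing that vertices of the Minkowski sum correspond exactly to slope gaps common to neither factor, so that no $p$-adic cancellation can occur in $c_{m^\ast}$; everything downstream is convexity and bookkeeping.
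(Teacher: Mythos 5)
The paper offers no proof of this statement at all: Theorem~\ref{th:B} is quoted as a classical result of Dumas, cited directly to \cite{D} and used as a black box in the proof of Theorem~\ref{th:5}. So there is nothing internal to compare against; your proposal must stand on its own, and it does. Your route --- reduce to two factors, prove $NP(gh)=NP(g)+NP(h)$ via the two-sided comparison, then do the lattice accounting --- is the standard modern proof of Dumas's theorem, and the details are sound. The upper bound from the ultrametric inequality is immediate; the decisive step, uniqueness of the minimizing decomposition at a vertex $m^\ast$ of the Minkowski sum, is handled correctly: choosing a supporting slope $\lambda$ strictly between the slopes of the two edges of $M$ adjacent to $m^\ast$ (hence an edge slope of neither factor, since every edge slope of a summand survives as an edge slope of $M$) makes the minimizers $i^\ast$, $j^\ast$ unique vertices with $i^\ast+j^\ast=m^\ast$, forces strict inequality $\nu_g(i)+\nu_h(j)>\nu_g(i^\ast)+\nu_h(j^\ast)$ for every other decomposition of $m^\ast$, and so rules out $p$-adic cancellation in $c_{m^\ast}$; one then uses that valuations agree with the hull exactly at vertices, which holds because vertices of a lower hull are coefficient points. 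Convexity then pins $NP(gh)$ to $M$, and your final bookkeeping --- each edge of each $NP(f_i)$ has lattice endpoints, so its slope-$\lambda$ horizontal length is a multiple of the denominator $e_\lambda$, which by Lemma~\ref{L:7} is exactly the width $b_s$ of each primitive segment of that slope on $NP(f)$ --- correctly produces the assignment $\epsilon_{is}$ and the identity $\deg f_i=\sum_s\epsilon_{is}b_s$, with $f_i(0)\neq 0$ (from $a_0\neq 0$) guaranteeing that $NP(f_i)$ spans the full interval $[0,\deg f_i]$. The only cosmetic gaps are routine: you should note explicitly that the open slope interval at a vertex avoids only finitely many edge slopes, so a valid $\lambda$ exists, and that the induction from $r=2$ to general $r$ also transports the slope-length additivity; neither affects correctness.
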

The following standard result will also be used in the sequel.
\begin{lemma}\label{L:7}
The number of lattice points lying on the line segment $AB$ joining the points having cartesian coordinates $A(a_1,b_1)$ and $B(a_2,b_2)$ in $\mathbb{R}^2$ including the endpoints  is equal to
\begin{eqnarray*}
1+\gcd(|a_1-a_2|,|b_1-b_2|).
\end{eqnarray*}
\end{lemma}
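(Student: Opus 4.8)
The plan is to reduce to the case where one endpoint is the origin and then identify the lattice points on the segment with integer multiples of the primitive direction vector. First I would translate by the integer vector $(-a_1,-b_1)$. Since $A$ is a lattice point, this translation is a bijection of $\mathbb{Z}^2$ onto itself, so it preserves the number of lattice points lying on the segment while sending $A$ to the origin $O=(0,0)$ and $B$ to $(p,q)$, where $p=a_2-a_1$ and $q=b_2-b_1$ are integers. Thus it suffices to count the lattice points on the segment joining $O$ and $(p,q)$.

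Next, I would set $d=\gcd(|p|,|q|)$ and write $p=dp'$, $q=dq'$ with $\gcd(|p'|,|q'|)=1$; in the degenerate case $p=q=0$ one interprets $d=0$, the segment collapses to the single point $O$, and the formula gives the trivial count $1$. The key claim is that the lattice points on the segment are \emph{exactly} the points $k(p',q')$ for $k=0,1,\ldots,d$, and that these are $d+1$ distinct points.

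To establish the claim I would parameterize the segment as $t(p,q)=(tp,tq)$ with $t\in[0,1]$. Each point $k(p',q')=(k/d)(p,q)$ corresponds to $t=k/d\in[0,1]$ and is manifestly a lattice point for $k=0,\ldots,d$, so these $d+1$ points all lie on the segment. Conversely, suppose $(x,y)$ is a lattice point on the segment, so $(x,y)=(tp,tq)$ for some $t\in[0,1]$; then $x=tdp'$ and $y=tdq'$. The substantive step is to show that $td$ is an integer: by B\'ezout's identity, since $\gcd(|p'|,|q'|)=1$ there are integers $u,v$ with $up'+vq'=1$, whence $ux+vy=td(up'+vq')=td$, and the left-hand side is an integer because $x,y,u,v$ are integers. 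Writing $k=td$, the constraint $t\in[0,1]$ forces $k\in\{0,1,\ldots,d\}$, and then $(x,y)=k(p',q')$, proving the claim and hence the count $1+d=1+\gcd(|a_1-a_2|,|b_1-b_2|)$.

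The argument is elementary throughout; the only place requiring a genuine idea is the appeal to B\'ezout to force the parameter $td$ to be an integer. This is exactly what rules out any ``extra'' lattice point lying strictly between two consecutive multiples of the primitive direction vector, and I expect it to be the one step worth writing out carefully.
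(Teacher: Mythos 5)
Your proof is correct and complete: the translation to the origin, the reduction to the primitive vector $(p',q')$, and the B\'ezout step forcing $td\in\mathbb{Z}$ together give a clean and fully rigorous argument, including the degenerate case. Note, however, that the paper offers no proof of this lemma at all --- it is quoted as a ``standard result'' and used without justification --- so there is nothing to compare your argument against; your write-up simply supplies the elementary proof the authors chose to omit, and the B\'ezout step is indeed the only point of substance.
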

\begin{proof}[\bf Proof of Theorem \ref{th:5}]
Let $f(z)=f_1(z)f_2(z)\cdots f_N(z)$ be a factorization of $f$ into $N$ irreducible polynomials $f_1,\ldots, f_N$
in $\mathbb{Z}[z]$. We may assume without loss of generality that $f_t$ is nonconstant for each $t=1,\ldots, N$.

First we assume that $j<n$. By the hypothesis, Newton polygon $NP(f)$ of $f$ with respect to $p$ has an edge joining the endpoints $A(0,v_p(a_0))$ and $B(j,0)$ and another edge joining the endpoints $B(j,0)$ and $C(n,v_p(a_n))$  as shown in Fig.~\ref{f1}. By Lemma \ref{L:7}, the total number of lattice points on the edges $AB$  and $BC$ including the endpoints are
\begin{eqnarray*}
1+\gcd(v_p(a_0),j)=1+d_1~\text{and}~1+\gcd(v_p(a_n),n-j)=1+d_2,
\end{eqnarray*}
respectively (see Fig.~\ref{f1}). Now letting
\begin{eqnarray*}
E=\{0,1,2,\ldots, d_1\},~F=\{d_1+1,\ldots, d_1+d_2\},
\end{eqnarray*}
and  denoting by $\chi_E$ and $\chi_F$, the characteristic functions of $E$ and $F$, respectively, we find that the list of all $d_1+d_2+1$ lattice points on $NP(f)$ with respect to $p$ is given by $(x_\ell,y_\ell)$, $\ell=0,1,\ldots, d_1+d_2$, where for each such index $\ell$, we have
\begin{eqnarray*}
x_\ell &=&\frac{j}{d_1}\ell\chi_{E}(\ell)+\Bigl(j+\frac{\ell-d_1}{d_2}(n-j)\Bigr)\chi_{F}(\ell),\\
y_\ell &=&\frac{v_p(a_0)}{d_1}(d_1-\ell)\chi_{E}(\ell)+\frac{v_p(a_n)}{d_2} (\ell-d_1)\chi_{F}(\ell).
\end{eqnarray*}
\begin{figure}[h!!!]
			\centering
			\includegraphics[width=\textwidth]{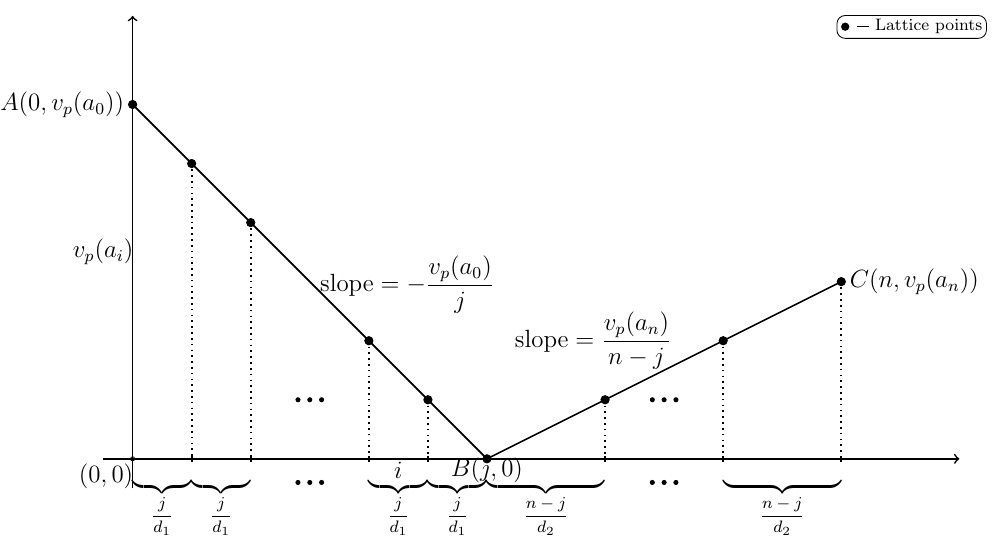}
			\caption{Newton polygon $NP(f)$ of $f$ for the case when $j<n$.}\label{f1}
		\end{figure}
By Theorem \ref{th:B} we find that for each $\ell=0,1,\ldots, k=d_1+d_2$, there exist unique index $i_\ell\in \{1,\ldots,N\}$ such that $\epsilon_{i_\ell \ell}=1$ (so that $\sum_{t=1}^{d_1+d_2}\epsilon_{it}\geq 1$ for each such index $i$), and that for each $i=1,\ldots, N$, we have
\begin{eqnarray*}
\deg f_i&=&\sum_{t=1}^{d_1+d_2}\epsilon_{it} (x_{t}-x_{t-1})\\
&=&\frac{j}{d_1}\sum_{t=1}^{d_1}\epsilon_{it}t+\sum_{t=d_1+1}^{d_1+d_2}\epsilon_{it}\Bigl(j+\frac{t-d_1}{d_2}(n-j)\Bigr)\\
&-&\frac{j}{d_1}\sum_{t=2}^{d_1+1}\epsilon_{it}(t-1)-\sum_{t=d_1+2}^{d_1+d_2}\epsilon_{it}\Bigl(j+\frac{t-d_1-1}{d_2}(n-j)\Bigr)\\
&=&\frac{j}{d_1}\sum_{t=1}^{d_1}\epsilon_{it}+\frac{n-j}{d_2}\sum_{t=d_1+1}^{d_1+d_2}\epsilon_{it}
\geq\min\left\{\frac{j}{d_1},~\frac{n-j}{d_2}\right\}\sum_{t=1}^{d_1+d_2}\epsilon_{it},
\end{eqnarray*}
which in view of the fact that $\sum_{t=1}^{d_1+d_2}\epsilon_{it}\geq 1$ shows that
\begin{eqnarray*}
\deg f_i\geq \min\left\{\frac{j}{d_1},~\frac{n-j}{d_2}\right\},~\text{for each}~i=1,\ldots,N.
\end{eqnarray*}
Now assume that $j=n$. In this case, $NP(f)$ consists only of the line segment $AB$ joining the points $A(0,v_p(a_0))$ and $B(n,0)$ as shown in Fig.~\ref{f2} so that the list of all lattice points on $NP(f)$ consists of the $ d_1+1$ points $(x_\ell,y_\ell)$, $0\leq \ell\leq d_1$, where for each such index $\ell$, we have
\begin{eqnarray*}
x_\ell &=&\frac{n}{d_1}\ell,~y_\ell =\frac{v_p(a_0)}{d_1}(d_1-\ell),
\end{eqnarray*}
\begin{figure}[h!!!]
			\centering
			\includegraphics[width=0.8\textwidth]{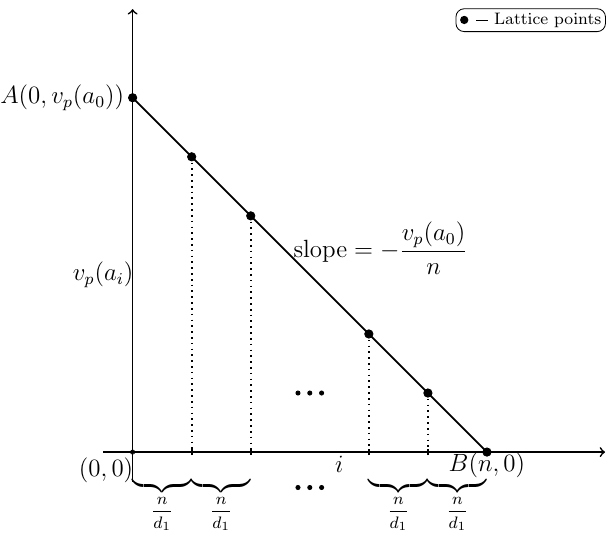}
			\caption{Newton polygon $NP(f)$ of $f$ for the case when $j=n$.}\label{f2}
		\end{figure}
Again by Theorem \ref{th:B} for each $\ell=0,1,\ldots, k=d_1$, there exist unique index $i_\ell\in \{1,\ldots,N\}$ such that $\epsilon_{i_\ell \ell}=1$ (so that $\sum_{t=1}^{d_1}\epsilon_{it}\geq 1$ for each such index $i$), and that for each $i=1,\ldots, N$, we have
\begin{eqnarray*}
\deg f_i&=&\sum_{t=1}^{d_1}\epsilon_{it} (x_{t}-x_{t-1})=\frac{n}{d_1}\sum_{t=1}^{d_1}\epsilon_{it}\geq \frac{n}{d_1},
\end{eqnarray*}
which proves the theorem for the case when $j=n$.
	\end{proof}
The following result will be used in proving Theorem \ref{th:3}.
\begin{lemma}\label{L:3}
Let $g=s_0+s_1z+ \cdots +s_nz^n \in \mathbb{Z}[z]$ be a primitive polynomial with $s_0s_n\neq 0$ and $s_0=\pm p^kd$ for some positive integers $k$ and $d$ and a prime $p\nmid d$ such that all zeros of $g$ lie in the region $|z| > \sqrt[\Delta_f]{d}$ in the complex plane. Then the polynomial $g$ is a product of at most
\begin{eqnarray*}
\min_{0\leq i\leq n}\{i+v_p(s_i)\}
\end{eqnarray*}
irreducible polynomials in $\mathbb{Z}[z]$. In particular, if $k=1$, or  $p\nmid s_1$, then $f$ is irreducible.
\end{lemma}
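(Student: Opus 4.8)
The plan is to bound the number $N$ of irreducible factors in an arbitrary factorization $g=g_1\cdots g_N$ in $\mathbb{Z}[z]$, where every $g_i$ may be taken nonconstant since $g$ is primitive. The first step is to show that $p\mid g_i(0)$ for each $i$. Writing $g_i(z)=\alpha_i\prod_\theta(z-\theta)$ over $\mathbb{C}$ with leading coefficient $\alpha_i\in\mathbb{Z}\setminus\{0\}$, I would invoke the defining property of $\Delta_f$ (namely that $g$ has no factor of degree less than $\Delta_f$), so that $\deg g_i\ge\Delta_f$, together with the hypothesis $|\theta|>\sqrt[\Delta_f]{d}$ for each zero, to estimate
\[
|g_i(0)|=|\alpha_i|\prod_\theta|\theta| > |\alpha_i|\,\bigl(\sqrt[\Delta_f]{d}\bigr)^{\deg g_i}\ge \bigl(\sqrt[\Delta_f]{d}\bigr)^{\Delta_f}=d .
\]
Hence $|g_i(0)|>d$ for every $i$.

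Next, from $\prod_{i=1}^N g_i(0)=g(0)=s_0=\pm p^k d$ with $p\nmid d$, I write $g_i(0)=\pm p^{a_i}b_i$ with $p\nmid b_i$, so that $\sum_i a_i=k$ and $\prod_i|b_i|=d$; in particular each $|b_i|\le d$. If some $a_i=0$ then $|g_i(0)|=|b_i|\le d$, contradicting $|g_i(0)|>d$. Therefore $a_i\ge 1$, i.e. $p\mid g_i(0)$, for every $i$ (and incidentally $N\le\sum_i a_i=k$).

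The heart of the argument is then a coefficient count. For each fixed index $i$ I would expand
\[
s_i=[z^i]\prod_{t=1}^N g_t=\sum_{i_1+\cdots+i_N=i}\ \prod_{t=1}^N [z^{i_t}]g_t ,
\]
and observe that in any monomial with $i_1+\cdots+i_N=i$ at most $i$ of the exponents $i_t$ can be positive, so at least $N-i$ of them vanish; each vanishing exponent contributes the factor $g_t(0)$, which is divisible by $p$ by the previous step. Thus every term is divisible by $p^{\max\{0,\,N-i\}}$, giving $v_p(s_i)\ge N-i$, i.e. $N\le i+v_p(s_i)$, for every $i=0,1,\dots,n$. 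Taking the minimum over $i$ yields the claimed bound. For the final assertion, $k=1$ forces $N\le 0+v_p(s_0)=1$, while $p\nmid s_1$ forces $N\le 1+v_p(s_1)=1$, so in either case $g$ is irreducible.

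I expect the main obstacle to be the first step: pinning down that $p$ divides $g_i(0)$ for \emph{every} factor simultaneously, since this is exactly where the geometric hypothesis on the location of the zeros and the degree lower bound $\Delta_f$ must be combined. Once this is in hand, the valuation estimate $v_p(s_i)\ge N-i$ is a clean combinatorial consequence. A secondary point to verify carefully is that the strict inequality $|g_i(0)|>d$ survives the edge case $d=1$, where $\sqrt[\Delta_f]{d}=1$; it does, because each $g_i$ is nonconstant and $|\alpha_i|\ge 1$, so the product over the (at least one) zeros is strictly larger than $1=d$.
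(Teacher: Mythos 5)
Your proposal is correct and follows essentially the same route as the paper's proof: use the zero-location hypothesis together with $\deg g_i\ge\Delta_f$ to force $|g_i(0)|>d$ and hence $p\mid g_i(0)$ for every irreducible factor, then expand each coefficient $s_i$ of the product to obtain $v_p(s_i)\ge N-i$. The only (cosmetic) difference is that you derive the valuation inequality directly while the paper phrases it as a contradiction, and you spell out more explicitly why $|g_i(0)|>d$ implies $p\mid g_i(0)$.
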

Consider the polynomial
\begin{eqnarray*}
f=-3+3z+343z^2-126 z^4+126 z^5+14406z^6,
\end{eqnarray*}
which satisfies the hypothesis of Theorem \ref{th:5} for the prime number $3$, $j=2$, $d_1=d_2=1$ so that here, $\Delta_f=\min\{2,4\}=2$.

Now if we consider the polynomial $g$ such that $g(z)=z^6f(1/z)$, then for $g$, we have $s_0=14406=7^4\times 6$ so that $p=7$, $k=4$, and $d=6$. We then have $\sqrt[\Delta_f]{d}=\sqrt{6}$, and we find that
\begin{eqnarray*}
126\sqrt{6}+126 \sqrt{6}^2+343\sqrt{6}^4+3\sqrt{6}^5+3\sqrt{6}^6\approx 14325.2 < 14406.
\end{eqnarray*}
Consequently, each zero of $g$ lies inside the open region $|z|>\sqrt{6}=\sqrt[\Delta_f]{d}$ in the complex plane.
By Lemma \ref{L:3}, the polynomial $g$ (hence $f$) is a product of at most
\begin{eqnarray*}
\min\{0+4,1+1,2+1,3+\infty,4+3,5+0,6+0\}=2
\end{eqnarray*}
irreducible factors in $\mathbb{Z}[z]$, where we note that $g$ factors as
\begin{eqnarray*}
g(z)=14406+126 z-126z^2+343z^4+3z^5-3z^6=(343+3 z-3z^2)(42+z^4),
\end{eqnarray*}
which is a product of exactly two irreducible polynomials in $\mathbb{Z}[z]$.
\begin{proof}[\bf Proof of Lemma \ref{L:3}]
Suppose that  $g(z)=g_1(z)\cdots g_N(z)$ be a product of $N$ irreducible factors $g_1,\hdots ,g_N$ in $\mathbb{Z}[z]$. Then $\Delta_f\leq \deg g_i$ for each $i=1,\ldots, N$. By the hypothesis, we have  $p^kd = |g(0)|= |g_1(0)| \cdots |g_N(0)|$, which shows that $|g_i(0)|\geq 1$ for each $i = 1, \hdots,N$. If $\alpha_i \neq 0$ is the leading coefficient of $g_i$ for each $i=1,\ldots, N$, then we may write $g_i= \alpha_i\prod_{\theta}(z-\theta)$, where the product is over all the zeros of $g_i$. Combining the aforementioned observations together with the hypothesis, we reach at the following conclusion:
\begin{eqnarray*}
|g_i(0)|=|\alpha_i|\prod_{\theta}|\theta| > |\alpha_i|\Bigl(\sqrt[\Delta_f]{d}\Bigr)^{\deg g_i} \geq |\alpha_i|\Bigl(\sqrt[\Delta_f]{d}\Bigr)^{\Delta_f}=|\alpha_i|d\geq d,
\end{eqnarray*} which shows that $|g_i(0)|>d$, and so, $p$ divides $|g_i(0)|$ for every $i = 1, \hdots,N$. This proves that $N \leq k$.

Now let $j\in \{1,\ldots, n\}$. Assume that $v_p(s_j)=J\geq 0$. We may assume without loss of generality that $k-j>J$. If possible, let $N-j>J$. Expressing $g_i$ as $g_i= \sum_{t=0}^{\deg g_i}s_{it}z^t \in \mathbb{Z}[z]$,  we find that
\begin{eqnarray*}
s_j = \sum_{i_1+i_2+\cdots +i_N=j} s_{1i_1}s_{2i_2}\cdots s_{Ni_N},
\end{eqnarray*}
where the indices under the summation sign belong to the set $\{0,1,\ldots, j\}$.  Since $N-j>J \geq 0$, for  $i_1, \ldots, i_N$ in the set $\{0,1,\ldots, j\}$ with $i_1 +\cdots +i_N = j$, one finds that at least $N-j$ of the indices $i_1, \ldots, i_N$ are all equal to zero. Since $g_i(0)=s_{i0}$ and $p$ divides $|g_i(0)|$ for each $i$, it follows that $p^{N-j}$ divides the expression $s_{1i_1}s_{2i_2}\cdots s_{Ni_N}$ for all choices of the indices $i_1,\ldots, i_N$ which satisfy $i_1+\cdots+i_N=j$. Consequently, $p^{N-j}$ divides the sum $\sum_{i_1+i_2+\cdots +i_N=j}s_{1i_1}\cdots s_{Ni_N}=s_j$, which shows that $J=v_p(s_j)\geq N-j>J$, a contradiction.
\end{proof}
\begin{proof}[\bf Proof of Theorem \ref{th:3}] We proceed as in the proof of Theorem \ref{th:1} and define  $g(z)=f(m+z)$ so that $g\in \mathbb{Z}[z]$ and $g=\sum_{i=0}^n s_i(m) z^i$, where $s_i(m)=f^{(i)}(m)/i!$ for each $i=0,\ldots,n$. By the hypothesis, we have $(m-1-h_f)^{\Delta_f}\geq d$ from which we have $m\geq h_f+1+\sqrt[\Delta_f]{d}$.  This in view of Lemma \ref{L:2} tells us that each zero of $g$ lies in the region $|z|> \sqrt[\Delta_f]{d}$ in the complex plane. This in view of rest of the hypotheses of the theorem and Lemma \ref{L:3} tells us that the polynomial $g$ (and hence $f$) is a product of at most $\min_{1\leq i\leq n}\{k,i+v_p(s_i(m))\}$ irreducible polynomials in $\mathbb{Z}[z]$.
\end{proof}
To prove Theorem \ref{th:4}, we first prove the following factorization result.
\begin{lemma}\label{L:5}
Let $g = s_0 + s_1z +\cdots+s_n z^n\in \Bbb{Z}[z]$ be a primitive polynomial with $s_0s_n\neq 0$ such that $s_n=\pm p^k d$ for some  positive integers $k$ and $d$ and a prime $p\nmid d$. Suppose that all zeros of $g$ lie in the region $|z|>\sqrt[\Delta_f]{d}$ in the complex plane.  If $|s_0/q|\leq |s_n|$, where $q$ is the smallest prime divisor of $s_0$, then the polynomial $g$ is a product of at most
\begin{eqnarray*}
\min_{1\leq i\leq n}\{k,i+v_p(s_{n-i})\}
\end{eqnarray*}
irreducible factors in $\mathbb{Z}[z]$. In particular, if $k=1$, or $p\nmid s_{n-1}$, then $g$ is irreducible.
\end{lemma}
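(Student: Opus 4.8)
The plan is to adapt, in dual form, the two arguments already developed for Lemma~\ref{L:3} and Lemma~\ref{L:4}. First I would fix a factorization $g=g_1\cdots g_N$ into nonconstant irreducible polynomials of $\mathbb{Z}[z]$ and denote by $\alpha_i$ the leading coefficient of $g_i$, so that $\alpha_1\cdots\alpha_N=s_n=\pm p^kd$. Since each $g_i$ has $\deg g_i\ge\Delta_f$ and all its zeros $\theta$ satisfy $|\theta|>\sqrt[\Delta_f]{d}$, writing $g_i=\alpha_i\prod_\theta(z-\theta)$ gives $|g_i(0)|/|\alpha_i|=\prod_\theta|\theta|>(\sqrt[\Delta_f]{d})^{\deg g_i}\ge d$, the exact dual of the estimate driving Lemma~\ref{L:3}. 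In particular $|g_i(0)|>|\alpha_i|d\ge d\ge 1$, so each $|g_i(0)|$ has a prime factor; as that factor divides $|s_0|=\prod_j|g_j(0)|$, it is at least $q$, whence $|g_i(0)|\ge q$.

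Next I would bound $N$ by $k$. Exactly as in the computation of Lemma~\ref{L:4}, the identity $|s_0|\,|\alpha_i|/|g_i(0)|=|s_n|\prod_{j\ne i}\bigl(|g_j(0)|/|\alpha_j|\bigr)$ holds for every $i$. Bounding each of the $N-1$ factors on the right below by $d$ and invoking the hypothesis $|s_0|\le q|s_n|$ gives $|\alpha_i|>d^{\,N-1}|g_i(0)|/q\ge d^{\,N-1}$, using $|g_i(0)|\ge q$. Hence for $N\ge 2$ we obtain $|\alpha_i|>d$ for each $i$; since $|\alpha_i|$ divides $p^kd$ and $p\nmid d$, any divisor exceeding $d$ must be a multiple of $p$, so $p\mid\alpha_i$ for every $i$. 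Comparing $p$-adic valuations in $\alpha_1\cdots\alpha_N=\pm p^kd$ then forces $N\le k$, while the case $N=1$ is trivially $\le k$.

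The remaining bounds $N\le i+v_p(s_{n-i})$ for $1\le i\le n$ follow from the dual of the coefficient argument in Lemma~\ref{L:3}. Writing $g_t=\sum_u s_{t,u}z^u$ with $s_{t,\deg g_t}=\alpha_t$, the coefficient $s_{n-i}$ of $g$ is a sum of products $\prod_t s_{t,u_t}$ over tuples with $\sum_t u_t=n-i$; since $\sum_t\deg g_t=n$, each such tuple satisfies $\sum_t(\deg g_t-u_t)=i$, so at most $i$ of its factors are non-leading and therefore at least $N-i$ of them equal some $\alpha_t$, which is divisible by $p$. Thus $p^{\,N-i}$ divides every summand and hence $s_{n-i}$, giving $v_p(s_{n-i})\ge N-i$. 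Combining the two estimates yields $N\le\min_{1\le i\le n}\{k,\,i+v_p(s_{n-i})\}$, and the ``in particular'' clause is the specialization $i=1$ (giving $N\le 1$ when $p\nmid s_{n-1}$) or the bound $k=1$.

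The genuinely delicate point is the $N\le k$ step. Unlike the self-contained constant-term estimate of Lemma~\ref{L:3}, here the desired inequality $|\alpha_i|>d$ points the ``wrong way'': the basic estimate only bounds $|g_i(0)|$ below, so the leading coefficient must be pinned down through the reciprocal-type identity of Lemma~\ref{L:4} together with the normalization $|g_i(0)|\ge q$ supplied by the hypothesis $|s_0/q|\le|s_n|$. I would be careful to check that the strict inequalities survive the edge case $d=1$ (where $\sqrt[\Delta_f]{d}=1$) and the boundary case $N=1$, so that the conclusion $p\mid\alpha_i$ is valid for every irreducible factor rather than merely generically.
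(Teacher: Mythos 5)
Your proposal is correct and follows essentially the same route as the paper's proof: the lower bound $|g_i(0)|/|\alpha_i|>d$ from the zero-location hypothesis, the telescoping identity combined with $|s_0/q|\le|s_n|$ and $|g_i(0)|\ge q$ to force $p\mid\alpha_i$ and hence $N\le k$, and the dual coefficient-expansion argument giving $v_p(s_{n-i})\ge N-i$. The only (cosmetic) difference is that you bound the product $\prod_{j\ne i}|g_j(0)|/|\alpha_j|$ directly by $d^{N-1}$, whereas the paper carries the exponents $\deg g_j/\Delta_f$ through and discharges them at the end; both yield $|\alpha_i|>d$ for $N\ge 2$.
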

In view of Theorem \ref{th:5} for the prime number $11$ and $j=n=6$, $d_1=2$, the polynomial
	\begin{eqnarray*}
		f=20449-3146z+121z^2+13442z^3-1034z^4+2209z^6
	\end{eqnarray*}
	has $\Delta_f=n/d_1=3$. Further, we have $s_6=47^2$ so that $p=47$, $k=2$, and $d=1$. Since $s_0=13^2\cdot11^2$, we have $q=11$, and so $|s_0/q|=1859<2209=|s_6|$. Now $\sqrt[\Delta_f]{d}=1$, and
	\begin{eqnarray*}
		\sum_{i=1}^{6}|s_i|=19952<20449=|s_0|,
	\end{eqnarray*}
	we deduce that each zero of $f$ lies inside the open region $|z|>1=\sqrt[\Delta_f]{d}$ in the complex plane. By Lemma \ref{L:5}, the polynomial $f$ is a product of at most
\begin{eqnarray*}
\min\{2,1+\infty,2+1,3+1,4+0,5+0,6+0\}=2
\end{eqnarray*}
irreducible factors in $\mathbb{Z}[z]$, where we note that $f$ is square of the irreducible polynomial $143-11z+47z^3$.
\begin{proof}[\bf Proof of Lemma \ref{L:5}]
Let $g(z)=g_1(z)\cdots g_N(z)$ be a product of $N$ irreducible polynomials $g_1,\ldots,g_N$ in $\mathbb{Z}[z]$. Then $\Delta_f\leq \deg g_i$ for all $i=1,\ldots,N$. Since $1\leq \min_{1\leq i\leq n}\{i+v_p(s_{n-i})\}$, we may assume without loss of generality that $N>1$. Let $\alpha_i\neq 0$ be the leading coefficient of $g_i$ for each $i$. We then have from the hypothesis that $\pm p^kd=s_n=\alpha_1\cdots \alpha_N$. We may write $g_i(z)=\alpha_i\prod_{\theta}(z-\theta)$ where the product runs over all zeros $\theta$ of $g_i$ for each $i$. We then arrive at the following:
\begin{eqnarray*}
|g_i(0)| &=& |\alpha_i|\prod_{\theta}|\theta|>  |\alpha_i|\bigl(\sqrt[\Delta_f]{d}\bigr)^{\deg g_i}\geq |\alpha_i|\bigl(\sqrt[\Delta_f]{d}\bigr)^{\Delta_f}=|\alpha_i|d\geq d,
\end{eqnarray*}
which shows that $|g_i(0)|>d$, and so, $|g_i(0)/q|\geq 1$ for all $i=1,\ldots, N$. Consequently, using the hypothesis, we get
\begin{eqnarray*}
|s_0/q| \leq |s_n|&<&|s_n|\left({\frac{|g_1(0)|}{|\alpha_1|d^{\frac{\deg g_1}{\Delta_f}}}\cdots \frac{|g_N(0)|}{|\alpha_N|d^{\frac{\deg g_N}{\Delta_f}}}}\right)\times \frac{|\alpha_i|d^{\frac{\deg g_i}{\Delta_f}}}{|g_i(0)|}
= |s_0|\frac{|\alpha_i|}{|g_i(0)|d^{\frac{n-\deg g_i}{\Delta_f}}},
\end{eqnarray*}
for each fixed index $i$, which yields $|\alpha_i|> |g_i(0)/q|d^{\frac{n-\deg g_i}{\Delta_f}}$ for each such $i$. We also
observe that
\begin{eqnarray*}
n-\deg g_i=\sum_{j=1,j\neq i}^{N} \deg g_j\geq \sum_{j=1,j\neq i}^{N}\Delta_f=(N-1)\Delta_f,
\end{eqnarray*}
for each index $i$, which in view of the fact that $N>1$ tells us that $(n-\deg g_i)/\Delta_f\geq N-1\geq 1$. This observation along with that $|g_i(0)/q|\geq 1$, we have
\begin{eqnarray*}
|\alpha_i|>|g_i(0)/q|d^{\frac{n-\deg g_i}{\Delta_f}}\geq d^{\frac{n-\deg g_i}{\Delta_f}}\geq d,~i=1,\ldots,N.
\end{eqnarray*}
Consequently, we have $|\alpha_i|>d$ for each $i$. This in the view that $p^k d=|s_n|=|\alpha_1|\cdots |\alpha_N|$ and the hypothesis that $p\nmid d$ shows that $p$ divides $|\alpha_i|$ for each $i=1,\ldots, N$. This proves that $N\leq k$.

Now let $j$ be a positive integer with $j\leq n$, and let $v_p(s_{n-j})=J\geq 0$. We may assume that $k > j+J$. Assume on the contrary that $N>j+J$, that is $N-j> J\geq 0$.  We may express $g_i=\sum_{t=0}^{\deg g_i}b_{it}z^t$ so that $\alpha_i=b_{i\deg g_i}$ for each $i$. Since we have $g(z)=g_1(z)\cdots g_N(z)$, we have $n=\sum_{i=1}^N \deg g_i$. Consequently, we have
\begin{eqnarray*}
s_{n-j}&=&\sum_{i_1+\cdots+i_N=n-j}b_{1i_1}\cdots b_{Ni_N}=\sum_{\sum_{t=1}^N(\deg g_t-i_t)=j} b_{1i_1}\cdots b_{Ni_N},
\end{eqnarray*}
where $0\leq i_t\leq \deg g_t$ and $0\leq \deg g_t -i_t\leq j$ for each index $i_t$. Since $N>j+J$, it follows that at least $N-j$ of the numbers $\deg g_1-i_1,\ldots,\deg g_N-i_N$ must be each equal to zero for any choice of the $N$-tuple $i_1,\ldots,i_N$. In view of this, for any $N$-tuple $i_1,\ldots, i_N$, at least $N-j$ of the indices satisfy $i_t=\deg g_t$, $1\leq t\leq N$, so that $b_{ti_t}=b_{t\deg g_t}=\alpha_t$, the leading coefficient of $g_t$, where we know that $p\mid \alpha_t$. Thus $p^{N-j}$ divides each term of the form $b_{1i_1}\cdots b_{Ni_N}$, and so, $p^{N-j}$ divides $\displaystyle \sum_{i_1+\cdots+i_N=n-j}b_{1i_1}\cdots b_{Ni_N}=s_{n-j}$, and so, $J=v_p(s_{n-j})\geq N-j>J$, which is a contradiction.
\end{proof}
\begin{proof}[\bf Proof of Theorem \ref{th:4}]
As in the proof of Theorem \ref{th:2}, we let $g(z)=f(m+z)$. So $g\in \mathbb{Z}[z]$ and $g=\sum_{i=0}^n s_i(m) z^i$, where $s_i(m)=f^{(i)}(m)/i!$ for each $i=0,1,\ldots,n$. Note that by the hypothesis we have $m\geq 1+h_f+\sqrt[\Delta_f]{d}$,  which in view of Lemma \ref{L:2} tells us that each zero of $g$ lies inside the open region $|z|>\sqrt[\Delta_f]{d}$ in the complex plane. Since $|s_n(m)|=p^k d$ and $|s_0(m)/q|\leq |s_n(m)|$, it follows that
the polynomial $g$ satisfies the hypothesis of Lemma \ref{L:5}. Consequently, the polynomial $g$ (and hence $f$) is a product of at most $\min_{1\leq i\leq n}\{k,i+v_p(s_{n-i}(m))\}$ irreducible polynomials in $\mathbb{Z}[z]$. Finally we observe that if we take $k=1$ or $p\nmid s_{n-1}(m)$, then $g$ is irreducible, and so, so does $f$.
\end{proof}
\subsection*{Acknowledgments}
The Senior Research Fellowship (SRF) to Ms. Rishu Garg wide grant no. CSIRAWARD/JRF-NET2022/11769 from Council of Scientific and Industrial Research (CSIR), INDIA is gratefully acknowledged.
\subsection*{Disclosure statement}
The authors report to have no competing interests to declare.

\end{document}